\documentclass[11pt]{amsart} 
\usepackage{amsmath, amssymb, amsthm,latexsym} 
\usepackage[table]{xcolor} 
\usepackage{booktabs, multirow}    
\usepackage{libertine,url,cite}
\usepackage[foot]{amsaddr} 
\usepackage{fancyhdr} 
\urlstyle{same}
\newcommand\textcode[1]{{\footnotesize \rm\texttt{#1}}}

\newcommand\hcancel[2][black]{\setbox0=\hbox{$#2$}%
\rlap{\raisebox{.45\ht0}{\textcolor{#1}{\rule{\wd0}{1pt}}}}#2}

\def\mmgroup{\textcode{mmgroup}}
\def\etal{et~al.\ }

\newcommand{\sg}[3]{\subsection{The group \label{#2} #1}
\begin{theorem}
    \label{thm_#2}
    #3
\end{theorem} 
} 

\newcommand{\sgNEW}[4]{\subsection{The group \label{#2} #1}
#4
  \begin{theorem}
    \label{thm_#2}
    #3
\end{theorem}
} 

\DeclareMathOperator{\im}{Im}

\newcommand{\thm}[2][]{\def\firstargtemp{#1}%
    Theorem\ifx\firstargtemp\empty\else s~\ref{thm_#1} and\fi~\ref{thm_#2}}
\newcommand{\prp}[1]{Proposition~\ref{#1}}
\newcommand{\lem}[1]{Lemma~\ref{#1}}

\newcommand{\mtt}[1]{{\rm #1}}

\newcommand{\CO}{\textup{Co}_1}
\newcommand{\FI}{\textup{Fi}_{24}}
\newcommand{\SZ}{\textup{Suz}}
\newcommand{\HE}{\textup{He}}
\newcommand{\HN}{\textup{HN}}
\newcommand{\Th}{\textup{Th}}
\newcommand{\QStr}{2^{1+24}}
\newcommand{\BB}{\mathbb{B}}
\newcommand{\MM}{\mathbb{M}}
\newcommand{\QQ}{\mathbf{Q}}
\newcommand{\GG}{\mathbf{G}}
\newcommand{\J}[1]{\textup{J}_{#1}}
\newcommand{\MT}[1]{\textup{M}_{#1}}
\newcommand{\dih}[1]{\textup{D}_{#1}}
\newcommand{\sdih}[1]{\textup{SD}_{#1}}
\newcommand{\psl}[2]{{\rm PSL}_{#1}(#2)} 

\newcommand{\pgl}[2]{{\rm PGL}_{#1}(#2)}

\newcommand{\ling}[2]{{\rm GL}_{#1}(#2)}

\newcommand{\lins}[2]{{\rm SL}_{#1}(#2)}
\newcommand{\unt}[2]{{\rm U}_{#1}(#2)}
\newcommand{\orth}[3]{{\rm O}^{#1}_{#2}(#3)} 
\newcommand{\lie}[3]{{\rm #1}_{#2}(#3)}
\newcommand{\alt}[1]{{\rm A}_{#1}}
\newcommand{\sym}[1]{{\rm S}_{#1}}
\newcommand{\symp}[2]{{\rm S}_{#1}(#2)}
\newcommand{\aut}[1]{\textup{Aut}(#1)} 
\newcommand{\cm}[1]{C_\MM(#1)}
\newcommand{\nm}[1]{N_\MM(#1)}
\newcommand{\cma}[1]{C_\MM(\langle #1 \rangle)}
\newcommand{\nma}[1]{N_\MM( \langle #1 \rangle)}

\def\normACA{\left( \alt{5} \times \unt{3}{8}{:}3 \right){:}2}
\def\normAAA{\left( \alt{5} \times \alt{12} \right){:}2}
\def\normLThreeTwo{\left( \psl{3}{2} \times \symp{4}{4}{:}2 \right) \udot 2}
\def\normASeven{\left( \alt{7} \times \left( \alt{5} \times \alt{5} \right){:}2^2 \right){:}2}
\def\normASixCu{\left( \alt{6} \times \alt{6} \times \alt{6} \right).\left( 2 \times \sym{4} \right)}
\def\normASix{\MT{11} \times \alt{6} \udot 2^2}
\def\normSFiveCu{\left( \sym{5} \times \sym{5} \times \sym{5} \right){:}\sym{3}}
\def\normLTwoElevenSq{\left( \psl{2}{11} \times \psl{2}{11} \right){:}4}
\def\normLTwoEleven{\left( \psl{2}{11} \times \MT{12} \right){:}2}
\def\normThirteenBSq{13^2{:}\lins{2}{13}{:}4}
\def\normThirteenB{13^{1+2}{:}\left( 3 \times 4.\sym{4} \right)}
\def\normThirteenA{\left( 13{:}6 \times \psl{3}{3} \right) \udot 2}
\def\normElevenSq{11^2{:}\left( 5 {\times} 2.\alt{5} \right)}
\def\normSevenBSqOut{7^2{:}\lins{2}{7}}
\def\normSevenBSq{7^{2+1+2}{:}\ling{2}{7}}
\def\normSevenASq{\left( 7^2{:} \left( 3 \times 2.\alt{4} \right) \times \psl{2}{7} \right){:}2}
\def\normSevenB{7^{1+4}{:}\left( 3 \times 2.\sym{7} \right)}
\def\normSevenA{\left( 7{:}3 \times \HE \right){:}2}
\def\normFiveBBq{5^4{:}\left( 3 \times 2 \udot \psl{2}{25} \right){:}2}
\def\normFiveBCu{5^{3+3} \udot \left( 2 \times \psl{3}{5} \right)}
\def\normFiveBSq{5^{2+2+4}{:}\left( \sym{3} \times \ling{2}{5} \right)}
\def\normFiveASq{\left( 5^2{:}4 \udot 2^2 \times \unt{3}{5} \right){:}\sym{3}}
\def\normFiveB{5^{1+6}{:}2 \udot \J2{:}4}
\def\normFiveA{\left( \dih{10} \times \HN \right) \udot 2}
\def\normThreeArk{3^8 \udot \orth-{8}{3}.2}
\def\normThreeBCu{3^{3+2+6+6}{:}\left( \psl{3}{3} \times \sdih{16} \right)}
\def\normThreeBSq{3^{2+5+10}{:}\left( \MT{11} \times 2.\sym{4} \right)}
\def\normThreeASq{\left( 3^2{:}2 \times \orth+{8}{3} \right) \udot \sym{4}}
\def\normThreeC{\sym{3} \times \Th}
\def\normThreeB{3^{1+12} \udot 2 \udot \SZ{:}2}
\def\normThreeA{3 \udot \FI}
\def\normTwoArk{2^{10+16} \udot \orth+{10}{2}}
\def\normTwoBQu{2^{5+10+20} \udot \left( \sym{3} \times \psl{5}{2} \right)}
\def\normTwoBCu{2^{3+6+12+18} \udot \left( \psl{3}{2} \times 3.\sym{6} \right)}
\def\normTwoBSq{2^{2+11+22} \udot \left( \MT{24} \times \sym{3} \right)}
\def\normTwoASq{2^2 \udot {}^2 \lie{E}{6}{2} {:} \sym{3}}
\def\normTwoB{\QStr \udot \CO}
\def\normTwoA{2 \udot \BB}

\makeatletter 
\renewcommand\subsubsection{\@startsection{subsubsection}{3}%
  \z@{.5\linespacing\@plus.7\linespacing}{-.5em}%
  {\normalfont\bfseries}} 
\makeatother
 
\usepackage[T1]{fontenc} 

\newcommand\makequotestraight{%
\begingroup\lccode`~=`' 
\lowercase{\endgroup\let~}\textquotesingle
\catcode`'=\active
}

\usepackage[a4paper,right=2.5cm, left=2.5cm, top=2.5cm, bottom=2.5cm, marginpar=1.6cm]{geometry}

\parindent 0.5cm
\parskip 0.7ex    

\newtheoremstyle{mythm}                   
{6pt}
{6pt}
{\it}
{}
{\bf}
{.}
{.5em}
{}

\newtheoremstyle{mydef}                   
{6pt}
{6pt}
{}
{}
{\bf}
{.}
{.5em}
{}

\newtheoremstyle{myrem}                   
{6pt}
{6pt}
{}
{}
{\bf}
{.}
{.5em}
{}

\theoremstyle{mythm}      
\newtheorem{theorem}{Theorem}[section]
\newtheorem{proposition}[theorem]{Proposition}
\newtheorem{lemma}[theorem]{Lemma}

\theoremstyle{mydef}

\theoremstyle{myrem}
\newtheorem{remark}[theorem]{Remark}
\newtheorem{procedure}[theorem]{Procedure}
\numberwithin{equation}{section}

\newcounter{ithmcount}

\newenvironment{iprf}{\begin{list}{{\rm
	\alph{ithmcount})}}{\usecounter{ithmcount}\labelwidth-5pt
      \leftmargin0pt \topsep3pt \itemsep1pt \parsep2pt}}{\end{list}}

\newenvironment{iprff}{\begin{list}{{\rm
	\alph{ithmcount})}}{\usecounter{ithmcountt}\labelwidth18pt
      \leftmargin18pt \topsep3pt \itemsep1pt \parsep2pt}}{\end{list}}

\newenvironment{ithm}{\begin{list}{{\rm \alph{ithmcount})}}{\usecounter{ithmcount}\labelwidth18pt
      \leftmargin18pt \topsep3pt \itemsep1pt \parsep2pt}}{\end{list}}

\renewcommand{\leq}{\leqslant} 
\renewcommand{\geq}{\geqslant}
\newcommand{\PSL}{{\rm PSL}}

\newcommand{\Aut}{{\rm Aut}}
\newcommand{\GL}{{\rm GL}}

\newcommand{\mt}[1]{{\rm #1}}

\makeatletter
\newcommand{\udot}{\mathpalette\udot@\relax}  
\newcommand{\udot@}[2]{%
  \begingroup
  \sbox\z@{$#1{:}$}%
  \sbox\tw@{$#1{.}$}%
  \raisebox{\dimexpr\ht\z@-\ht\tw@}{$\m@th#1.$}%
  \endgroup
}
\makeatother

\usepackage{fancyvrb}
\fvset{commandchars=\\\{\}}

\pagestyle{headings}
\pagestyle{fancy}
\setlength{\headwidth}{\textwidth}
\fancyhf{}
\headheight 18pt
\fancyhead[LE,RO]{\thepage}
\fancyhead[CO]{Explicit construction of the maximal subgroups of the Monster}
\fancyhead[CE]{Dietrich--Lee--Pisani--Popiel}

\makeatletter
\@namedef{subjclassname@1991}{2020 MSC}
\makeatother

\begin{document}

\title{Explicit construction of the maximal subgroups of the Monster}
\subjclass[2000]{} 
\author[H. Dietrich]{Heiko Dietrich}
\author[M. Lee]{Melissa Lee}
\author[A. Pisani]{Anthony Pisani}
\author[T. Popiel]{Tomasz Popiel}
\address[Dietrich, Lee, Pisani, Popiel]{School of Mathematics, Monash University, Clayton VIC 3800, Australia}
\email{\rm heiko.dietrich@monash.edu, melissa.lee@monash.edu, apis0001@student.monash.edu,}
\email{\rm  tomasz.popiel@monash.edu}
\keywords{finite simple groups, sporadic simple groups, Monster group, maximal subgroups}
\thanks{This work forms part of Pisani's Honours thesis at Monash University. Lee acknowledges the support of an Australian Research Council Discovery Early Career Researcher Award (project number DE230100579).}
\subjclass{Primary: 20D08. Secondary: 20E28.}
\date{\today}

\begin{abstract}
  Seysen's Python package \mmgroup{} provides functionality for fast computations within the sporadic simple group $\MM$, the Monster. The aim of this work is to present an \mmgroup{} database of maximal subgroups of $\MM$: for each conjugacy class $\mathcal{C}$ of maximal subgroups in $\MM$, we construct  explicit group elements in \mmgroup{} and prove that these elements generate a group in~$\mathcal{C}$. Our generators and the computations verifying correctness are available in accompanying code. The maximal subgroups of $\MM$ have been classified in a number of papers spanning several decades; our work constitutes an independent verification of these  constructions. We also correct the claim that $\MM$ has a maximal subgroup $\psl{2}{59}$, and hence identify  a new maximal subgroup $59{:}29$.  
\end{abstract}

\maketitle

\thispagestyle{empty}

\section{Introduction}\label{sec_intro}  

\noindent The classification of the maximal subgroups of the Monster group $\MM$, the largest sporadic simple group, is a result of decades of effort led primarily by Holmes, Norton, and Wilson. In 2017, Wilson \cite{survey} noted that over $15$ papers had been dedicated to this classification, yet a few open cases cases seemed largely resistant to theoretical arguments and posed significant computational difficulties. These cases were recently settled by Dietrich, Lee, and Popiel \cite{dlp} with the help of the newly available Python software package \mmgroup{} developed by Seysen \cite{serpent,fast_monster,mmgroup}. For more background information and a comprehensive discussion, we refer to \cite{dlp} and the references therein. 

While much of the early classification work was theoretical, extensive computations eventually became necessary to construct or rule out the existence of various almost simple maximal subgroups. Several constructions, in fact, rely on calculations using a non-standard computational model developed by Holmes and Wilson \cite{monp}. Although groundbreaking at the time, this model is not publicly available, making it difficult to verify or reproduce the results. Seysen's \mmgroup{} package is a game changer in this regard, providing, for the first time, a publicly available framework for \emph{fast} computations in~$\MM$. In the  course of completing the maximal subgroup classification, the work in \cite{dlp} yielded explicit generators in $\MM$ (in \mmgroup{} format) for the maximal subgroups $\mathrm{PGL}_2(13)$ and $\mathrm{PSU}_3(4)$, along with several auxiliary subgroups of $\MM$. The aim of this paper is to consider the remaining maximal subgroups of $\MM$, as described in the existing literature, and to provide explicit generators for these groups in \mmgroup{} format with computationally reproducible proofs of correctness. Our generators (and computations verifying their correctness) are available in accompanying Python code  \cite{pisgit}. Our explicit calculations in \mmgroup{} confirm the maximal subgroup constructions available in the literature, with the exception of the construction of $\psl{2}{59}$ in \cite{l2_59}; this leads to our first result.

\begin{theorem}\label{thm_psl259}
The Monster has no subgroup $\psl{2}{59}$, but a unique class of subgroups $59{:}29$, which are maximal subgroups.
\end{theorem}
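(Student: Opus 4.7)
The plan begins from the fact that all Sylow $59$-subgroups of $\MM$ are conjugate, so fix one $P=\langle g\rangle$. The ATLAS records two Galois-conjugate classes of elements of order $59$ in $\MM$, so the $58$ nontrivial powers of $g$ split into two $\MM$-orbits of size $29$; hence the image of $\nma{g}/\cm{g}$ in $\aut{P}\cong(\mathbb{Z}/59)^\ast$ has order exactly $29$. Combined with $\cm{g}=\langle g\rangle$ (also from the ATLAS), this forces $|\nma{g}|=59\cdot 29=1711$, and hence $\nma{g}\cong 59{:}29$. In \mmgroup{} I would construct $g$ of order $59$ by randomised search (as in \cite{dlp}), locate an element $y$ of order $29$ with $ygy^{-1}=g^k$ for some $k$ of multiplicative order $29$ modulo $59$, and verify $|\langle g,y\rangle|=1711$; this identifies $\langle g,y\rangle=\nma{g}\cong 59{:}29$, and Sylow conjugacy yields the unique $\MM$-class of such subgroups.

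To exclude $\psl{2}{59}$, suppose $G\cong\psl{2}{59}$ embeds in $\MM$. After $\MM$-conjugation we may assume $\langle g\rangle\le G$, so $N_G(\langle g\rangle)=\nma{g}=\langle g,y\rangle$ and in particular $y\in G$. Since $N_G(\langle y\rangle)\cong\dih{58}$ in $\psl{2}{59}$, there is an involution $t\in G$ with $tyt^{-1}=y^{-1}$. Moreover $\langle g,t\rangle=G$: the only subgroups of $G$ containing $\langle g\rangle$ are $\langle g\rangle$, the Borel $59{:}29$, and $G$, and the Borel contains no involution. So any embedded $\psl{2}{59}$ is generated by $g$ together with some involution of $\MM$ inverting $y$. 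From the ATLAS $\cm{y}=\langle y\rangle$ with all nontrivial powers of $y$ being $\MM$-conjugate, whence $\nma{y}\cong 29{:}28$ contains precisely $29$ involutions (one coset of $\langle y\rangle$), each inverting $y$. A direct calculation shows that $\langle y\rangle$ acts transitively on these involutions by conjugation, and consequently the resulting subgroups $\langle g,t\rangle$ are pairwise $\MM$-conjugate; it therefore suffices to locate one such $t$ via \mmgroup{} and confirm that $|\langle g,t\rangle|\ne|\psl{2}{59}|=102660$, yielding the contradiction.

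Maximality then follows automatically: any proper overgroup of $59{:}29$ in $\MM$ lies inside some maximal subgroup of $\MM$ of order divisible by $59$, and by the ATLAS list---corrected by the non-existence of $\psl{2}{59}$---no such maximal subgroup other than $59{:}29$ itself remains. The principal obstacle is the \mmgroup{} order-computation in the contradiction step: the subgroup $\langle g,t\rangle$ could a priori be very large (even all of $\MM$), so the order-computation strategy must be designed with care to terminate efficiently. Locating $g$, $y$, and a suitable $t$ by randomised search in \mmgroup{} is routine by comparison.
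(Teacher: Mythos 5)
Your overall skeleton for the non-existence part (reduce any embedded $G\cong\psl{2}{59}$ containing a fixed $\langle g\rangle$ to $\langle g,t\rangle$ with $t$ an involution inverting a fixed $y$ of order $29$ in $\nma{g}$) is logically sound, but it rests on three steps that do not survive scrutiny. First, and most seriously, constructing $y$ at all — i.e.\ exhibiting the $59{:}29$ inside \mmgroup{} — is exactly the computation that Remark~\ref{remget59} describes as the outstanding difficulty of this project (a na\"ive search succeeds with probability about $1$ in $10^{48}$, and even the adapted Bray--Parker--Wilson method only improves this to about $1$ in $10^8$); calling it ``routine by comparison'' inverts the actual difficulty, and the paper's proof is deliberately organised so that the $59{:}29$ is never constructed. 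Second, the ATLAS fact you invoke, $\cm{y}=\langle y\rangle$, is false: $\MM$ has elements of order $87=3\cdot29$, so $C_\MM(29\mathrm{A})$ has order $87$, not $29$. Consequently $\nma{y}$ is not $29{:}28$, the set of involutions inverting $y$ is a subset of a coset of size $87$ rather than $29$, and your transitivity claim (hence the reduction to a \emph{single} $t$) fails as stated — it is repairable (each $\langle y\rangle$-orbit of such involutions still has length $29$, so there are at most three orbits), but not with the argument given. Third, the decisive step ``confirm that $|\langle g,t\rangle|\ne 102660$'' is not a computation \mmgroup{} can perform: there is no order oracle for an arbitrary $2$-generated subgroup of $\MM$, and you correctly flag this as the principal obstacle without resolving it. The fix is to test only \emph{element orders} of short words in the candidate generators against the spectrum $\{1,2,3,5,6,10,15,29,30,59\}$ of $\psl{2}{59}$ — which is precisely what the paper does, but starting from explicitly constructed $\alt{5}$ subgroups of types B and T, a $\dih{10}$ inside them, and the $500$ class-2B involutions in its centraliser $5^3.(4\times\alt{5})$ of order $30000$, all of which are feasible objects.

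Two further points. Your derivation of $\nma{g}\cong 59{:}29$ from the two inverse classes of order-$59$ elements and $|C_\MM(59\mathrm{A})|=59$ (via Burnside fusion in the abelian Sylow $59$-subgroup) is correct and matches what the paper imports from Wilson's odd-local classification. But your maximality argument is circular: you appeal to ``the ATLAS list'' of maximal subgroups of $\MM$, which is the very classification this theorem is correcting. The paper instead argues directly: a maximal overgroup $M$ of $H=59{:}29$ has the form $N_\MM(S^m)$ for a characteristically simple socle; if $59$ divides $|S^m|$ then $m=1$ and $M=N_\MM(59)=H$, and otherwise the normal $59$ in $H$ would induce an order-$59$ element of $\Aut(S)/S$ for some simple section $S$ of $\MM$, which a check over all such $S$ rules out. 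You would need an argument of this self-contained kind rather than a citation to the list being verified.
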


Theorem \ref{thm_psl259} is in conflict with the main result of \cite{l2_59} which claims that $\MM$ \emph{has} a maximal subgroup $\psl{2}{59}$. We prove Theorem \ref{thm_psl259} in Section \ref{sec_psl259} and illustrate in detail how the final conclusion of \cite{l2_59} is in contradiction with our calculations. Exhibiting elements in $\MM$ generating $59{:}29$ presents significant computational challenges and is the subject of ongoing work, see Remark \ref{remget59}.

The main result of this paper can be summarised as follows.

\begin{theorem}\label{main_thm}
For each maximal subgroup $N$ of the Monster $\MM$ listed in Table \ref{maxsgtab}, with the exception of $59{:}29$, the accompanying code \cite{pisgit} provides explicit elements of $\MM$ in \mmgroup{} format that generate a maximal subgroup of $\MM$ isomorphic to $N$.
\end{theorem}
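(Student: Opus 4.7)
The plan is to verify the theorem class-by-class, treating each row of Table \ref{maxsgtab} other than $59{:}29$ separately. For each maximal subgroup $N$ there are two tasks: (i) exhibit elements $g_1,\ldots,g_k\in\MM$ as explicit words in the standard \mmgroup{} generators, and (ii) certify that $H := \langle g_1,\ldots,g_k\rangle$ is isomorphic to $N$. Since the maximal subgroups of $\MM$ are now fully classified, and since Table \ref{maxsgtab} lists exactly one representative per conjugacy class, matching the isomorphism type pins down the $\MM$-conjugacy class of $H$ and hence its maximality.

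For the construction step, I would follow the existing literature (principally the Holmes--Norton--Wilson papers, together with \cite{monp} and \cite{dlp}), translating each published generating set for $N$ into \mmgroup{} elements. Wherever a useful subgroup has already been built in \cite{dlp}, the required generators can be obtained by adjoining suitable conjugating, normalising, or centralising elements produced by direct \mmgroup{} search. For a large $p$-local normalizer $\nma{x}$, the recipe is to first locate an \mmgroup{}-element $x$ in the required $\MM$-conjugacy class (certified using the class invariants already implemented in \mmgroup{}), then extend by generators of a previously-known subgroup of $\nma{x}$.

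The verification step is a case analysis driven by the structure of $N$. When $N$ is almost simple of moderate order (such as $\MT{11}\times\alt{6}\udot 2^2$, $(\psl{2}{11}\times\MT{12}){:}2$, or $\sym{3}\times\Th$), I would check a known finite presentation of $N$ on the chosen generators: each relation reduces to an equality test in \mmgroup{}, and for composite $N$ the non-triviality of the relevant composition factors is confirmed by exhibiting elements of prescribed orders. When $N$ is a large local subgroup (for example $2\udot\BB$, $\QStr\udot\CO$, or $3\udot\FI$), direct order or presentation checks are infeasible, so I would exploit the characterisation of $N$ as $\nm{X}$ or $\cm{X}$ for an explicit subgroup $X$: each $g_i$ is checked to normalise (resp.\ centralise) $X$, and $H$ is shown to be large enough either by demonstrating that it contains a previously-constructed subgroup of known order, or by recognising a quotient of $H$ modulo the $p$-core via a presentation of the appropriate almost simple section.

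The main obstacle is the isomorphism-type verification in the largest local cases, where neither a full presentation nor a direct order computation is tractable. The strategy there is to work modulo the explicitly-constructed $p$-core, reducing recognition to a presentation check on an almost simple quotient, while the action on the core is verified directly from the commutator identities between the $g_i$ and the generators of that core. The omitted class $59{:}29$ is deferred: producing explicit generators in \mmgroup{} format is computationally demanding and is discussed in Remark \ref{remget59}.
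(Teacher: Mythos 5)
Your case-by-case architecture --- construct a core subgroup $E$, characterise $N$ as $\nm{E}$, verify that the proposed generators normalise $E$ and contain $C_\MM(E)$ together with a known large subgroup, and use presentations for the almost simple pieces --- is essentially the paper's Procedure~\ref{provesg} and Procedure~\ref{nonlocalproc}. The genuine gap is in your opening reduction: you claim that certifying the abstract isomorphism type of $H=\langle g_1,\dots,g_k\rangle$ pins down its $\MM$-conjugacy class and hence its maximality. The classification results you would invoke do not assert that every subgroup of $\MM$ abstractly isomorphic to a listed $N$ is maximal; they assert that the normaliser of a subgroup $E$ lying in a \emph{specific} $\MM$-conjugacy class is maximal of shape $N$. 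Converting between the two requires an additional uniqueness statement that is not part of the classification and is genuinely delicate: $\MM$ has three classes of $\psl{2}{13}$, eight classes of $\alt{5}$, two classes of $7\mt{B}$-pure $7^2$ and two classes of singular $2^5$, and in each case only some of these have maximal normalisers, so the $\MM$-class of $E$ --- not merely its isomorphism type --- is the linchpin. Your proposal never says how that class is determined, which is where most of the paper's work lies ($\chi_\MM$-values, $p$X-purity, the perpendicularity/singularity tests and ark construction for $p=2$, Norton's typing of $\alt{5}$'s). Moreover, for the largest entries the reduction is computationally unworkable in the direction you state: one cannot feasibly certify that a subgroup of order $|\QStr||\CO|$ is \emph{abstractly isomorphic} to $\normTwoB$ rather than merely of that order and shape; the paper obtains the isomorphism type of $H$ as a \emph{consequence} of the classification once $H=N_\MM(E)$ is proved, not as a computational input.

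The repair is the route you in fact sketch for the local cases, made uniform and completed: pin down the $\MM$-class of $E$ itself, quote Proposition~\ref{bigoddlocalprop} or Proposition~\ref{2localprop} (or the Atlas/Norton data in the non-local cases) to conclude that $N_\MM(E)\cong N$ is maximal, and then prove $\langle L_N\rangle=N_\MM(E)$ by an order count. For that count the paper builds the full $p$-core inside $\langle L_N\rangle$ using the coset-counting statements \lem{ext_lemma} and \lem{odd_ext} (or, in the $2$-local cases, a chain of action homomorphisms whose image sizes multiply up to $|N|$), and then enumerates the automorphisms of $E$ induced by conjugation to certify surjectivity onto $N_\MM(E)/C_\MM(E)$. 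Your ``commutator identities on the core'' step addresses the action of $H$ on the core but not this last surjectivity onto the quotient, which is needed to rule out $\langle L_N\rangle$ being a proper subgroup of $N_\MM(E)$ containing $C_\MM(E)$.
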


With the exception of the groups $\psl{2}{59}$ and $59{:}29$, which will be discussed in Section \ref{sec_psl259}, the correctness of this table follows from \cite{survey} and \cite{dlp}, see in particular \cite[Theorem~1.7]{dlp}.  Generators for the maximal subgroups $\normTwoA$ and $\normTwoB$ are already provided in \cite{dlp} and in \mmgroup{}. For each remaining maximal subgroup $N\ne 59{:}29$, we prove that the associated elements  provided in the accompanying code \cite{pisgit} generate a maximal subgroup of $\MM$ that is isomorphic to $N$.  Specifically, $p$-local maximal subgroups with $p$ odd are discussed in Section \ref{oddlocal}.   The $2$-local maximal subgroups are discussed in Section \ref{2local}. Section \ref{a5s} provides some necessary details on alternating subgroups $\alt{5}$ in $\MM$. Non-local maximal subgroups are considered in Sections \ref{psgl} and \ref{nonlocal}.

\def\mygray{gray!10}
\def\myblue{blue!10}

\def\mygray{white}
\def\myblue{white}

\renewcommand {\arraystretch} {1.1}
  \begin{table}[t]
\begin{tabular}{l|l|l} \specialrule{\heavyrulewidth}{0pt}{0pt}
\cellcolor{\myblue} $\normTwoA$ & \cellcolor{\mygray} $\normFiveA$ & \cellcolor{\myblue} $\normAAA$ \\
\cellcolor{\myblue} $\normTwoB$ & \cellcolor{\mygray} $\normFiveB$ & \cellcolor{\mygray} $\normASixCu$ \\
\cellcolor{\mygray} $\normTwoASq$ & \cellcolor{\mygray} $\normFiveASq$ & \cellcolor{\mygray} $\normACA$ \\
\cellcolor{\mygray} $\normTwoBSq$ & \cellcolor{\mygray} $\normFiveBSq$ & \cellcolor{\mygray} $\normLThreeTwo$ \\
\cellcolor{\mygray} $\normTwoBCu$ & \cellcolor{\mygray} $\normFiveBCu$ & \cellcolor{\mygray} $\normLTwoEleven$ \\
\cellcolor{\mygray} $\normTwoBQu$ & \cellcolor{\mygray} $\normFiveBBq$ & \cellcolor{\mygray} $\normASeven$ \\
\cellcolor{\mygray} $\normTwoArk$ & \cellcolor{\mygray} $\normSevenA$ & \cellcolor{\mygray} $\normASix$ \\
\cellcolor{\myblue} $\normThreeA$ & \cellcolor{\mygray} $\normSevenB$ & \cellcolor{\mygray} $\normSFiveCu$ \\
\cellcolor{\mygray} $\normThreeB$ & \cellcolor{\mygray} $\normSevenASq$ & \cellcolor{\myblue} $\normLTwoElevenSq$ \\
\cellcolor{\myblue} $\normThreeC$ & \cellcolor{\mygray} $\normSevenBSq$ & \cellcolor{\myblue} $\unt{3}{4}{:}4$ \\
\cellcolor{\mygray} $\normThreeASq$ & \cellcolor{\myblue} $\normSevenBSqOut$ & \cellcolor{\mygray} $\psl{2}{71}$ \\
\cellcolor{\mygray} $\normThreeBSq$ & \cellcolor{\myblue} $\normElevenSq$ & \cellcolor{\mygray} $\hcancel{\psl{2}{59}}$ \\
\cellcolor{\mygray} $\normThreeBCu$ & \cellcolor{\myblue} $\normThirteenA$ & \cellcolor{\mygray} $\psl{2}{41}$ \\
\cellcolor{\mygray} $\normThreeArk$ & \cellcolor{\mygray} $\normThirteenB$ & \cellcolor{\mygray} $\pgl{2}{29}$ \\
$~$ & \cellcolor{\mygray} $\normThirteenBSq$ & \cellcolor{\myblue} $\pgl{2}{19}$ \\
$59{:}29$ & \cellcolor{\mygray} $41{:}40$ & \cellcolor{\myblue} $\pgl{2}{13}$ \\
\specialrule{\heavyrulewidth}{0pt}{0pt} \end{tabular}
\vspace*{1ex}

\caption[The maximal subgroups of $\MM$]{\label{maxsgtab}\small{%
    The maximal subgroups of $\MM$. For each group, we provide generators in \mmgroup{} format; see Section \ref{correction} for comments on  $\pgl{2}{59}$ and ${59{:}29}$. 
}}
    \end{table}  

\subsection{How to read this paper}
\label{sec:how_to_read}

This is a computational paper and the main result of our work is a database of maximal subgroups of $\MM$ in \mmgroup{} format, made available in a well-documented Jupyter Notebook \cite{pisgit}. This paper is concerned with the proof that the database is correct. Our proof relies on theoretical results from the classification of the maximal subgroups, and on explicit computations in \mmgroup{}. We also often refer to data concerning the maximal subgroups of various simple groups listed in the Atlas \cite{atlas,gap_ctbllib}. (For example, we establish that a group $G$ is generated by given elements of orders $n$ and $m$ because, according to the Atlas, there is no maximal subgroup of $G$ that has elements of these orders.) We recommend that  our proofs are  read alongside the information provided in the Notebook, because the latter often contains the calculations proving the claims made in the text.

For each maximal subgroup $N\ne 59{:}29$ of $\MM$ listed in Table \ref{maxsgtab}, there is a corresponding subsection in this paper and in the Jupyter Notebook. We define certain elements of $\MM$ in \mmgroup{} format and prove that the group generated by these elements is maximal in $\MM$ and isomorphic to $N$.  Depending on whether $N$ is $p$-local or not, we employ different strategies that are outlined below.

\noindent {\bf Notation.}
Most of our notation is standard and typically aligns with the conventions established in the Atlas \cite{atlas}. One significant exception is that we use $\PSL_d(q)$ in place of $\mtt{L}_d(q)$, and apply similar conventions for other simple classical groups. Unless specified otherwise, we denote by $2\mt{A}$, $2\mt{B}$, $3\mt{A}$, etc.\ the conjugacy classes of $\MM$, and for a conjugacy class $p$X we write $p$X$^{k}$ for an elementary abelian group of size $p^k$ whose non-identity elements all belong to $p$X; such a group is called $p$X-{\em pure}. We denote the dihedral group of order $n$ by $\dih{n}$, the alternating group of degree $n$ by $\alt{n}$, and the symmetric group of degree $n$ by $\sym{n}$. The symbol $n$ is also used to denote a cyclic group of order $n$. An extension of a group $B$ by a group $A$ is denoted by $A.B$ (or occasionally $AB$), with $A$ being the normal subgroup. To emphasise that an extension is split, we may use the notation $A{:}B$, whereas $A\udot B$ indicates a non-split extension. We call this description of a group the \emph{shape} of the group, and stress that the shape does in general not define the group up to isomorphism. (For example, there are in general many non-isomorphic groups that are extensions of $B$ by $A$, that is,  of shape $A.B$.) We follow the convention that $A.B.C=(A.B).C$. 
An elementary abelian group of order $p^k$ is denoted by $p^k$, where $p$ is a prime number and $k$ is a positive integer, while $p^{k+\ell}$ denotes an extension $p^k.p^\ell$. We often use a subscript to indicate the order of a group element; for instance, $g_5$ may refer to an element of order 5. Finally, we denote by $\GG$ the centraliser $C_{\MM}(z)$ of a distinguished 2B involution $z$.  Extended functionality and faster computation are available in \mmgroup{} for elements of $\GG$; see \cite{fast_monster} for details.


    \section{\label{oddlocal}The Odd-Local Maximal Subgroups of $\MM$}
    
    \noindent
    A subgroup $U$ of $\MM$ is $p$-{\em local} if $U=N_\MM(E)$ for some $p$-subgroup $E$; it is \emph{maximal $p$-local} if $U$ is maximal among all $p$-local subgroups with respect to inclusion. We say that $U$ is \emph{$p$-local maximal} if $U$ is $p$-local and also a maximal subgroup of $\MM$. Incorporating earlier work of Norton, Wilson \cite{odd_local} classified the maximal $p$-local subgroups of $\MM$ for odd $p$. However, questions about maximality of these groups were only resolved in all cases 20 years later
    through computational constructions of containing subgroups of the Monster, see for example \cite{subgroups_A5}. 
     The updated results from \cite{odd_local} pertaining to maximal subgroups of
     $\MM$ are summarised in Table~\ref{max_odd_tab} and Proposition \ref{bigoddlocalprop}.

     \subsection{Preliminary lemmas}
     We start with two preliminary results that are frequently used; the first is an easy observation. 

    \begin{lemma}
    	\label{ext_lemma}
    	Let $G$ be a finite group with $H \le G$ and $g \in G$. If $g \notin H$, then
    	$| \langle H, g \rangle | \ge 2 | H |$. Moreover,
    	if the order of $g$ is a prime power $p^n$ and $g^{(p^{n-1})} \notin H$,
    	then $| \langle H, g \rangle | \ge p^n | H |$.
    \end{lemma}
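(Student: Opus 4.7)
The first part is immediate from Lagrange: if $g\notin H$, then $H$ is a proper subgroup of $\langle H,g\rangle$, so its index is at least $2$, giving $|\langle H,g\rangle|\ge 2|H|$.

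For the second part, the plan is to exhibit $p^n$ pairwise distinct right cosets of $H$ inside $\langle H, g\rangle$, namely the cosets $Hg^i$ for $i=0,1,\dots,p^n-1$. Suppose for contradiction that $Hg^i=Hg^j$ for some $0\le i<j<p^n$. Then $g^{j-i}\in H$, and hence $g^d\in H$ where $d=\gcd(j-i,p^n)$. Since $d$ divides $p^n$, it must be a power of $p$, say $d=p^k$ with $k\le n$; and since $0<j-i<p^n$, we have $d<p^n$, so $k\le n-1$. Therefore $p^{n-1}/d$ is an integer, and
\[
g^{p^{n-1}} = \bigl(g^d\bigr)^{p^{n-1}/d}\in H,
\]
contradicting the assumption $g^{p^{n-1}}\notin H$. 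It follows that the cosets $Hg^i$ are pairwise distinct, so $[\langle H,g\rangle : H]\ge p^n$ and the bound follows.

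There is no real obstacle here; the statement is essentially a bookkeeping fact about the subgroup generated by $H$ together with a cyclic group $\langle g\rangle$ of prime-power order. The only subtlety is recognising that the divisibility argument uses the full strength of the hypothesis $g^{p^{n-1}}\notin H$ (rather than just $g\notin H$), which forces every proper power of $g$ consistent with a coset collision to already lie in $H$, contradiction.
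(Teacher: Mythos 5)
Your proof is correct and follows essentially the same route as the paper: both count the cosets $Hg^i$ for $0\le i<p^n$ and reduce a coset collision to the conclusion $g^{p^{n-1}}\in H$, the paper via the observation that $\langle g\rangle\cap H$ is a subgroup of the cyclic $p$-group $\langle g\rangle$ and hence trivial, you via an equivalent explicit gcd computation.
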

    
    \begin{proof}
        If $g \notin H$, then $H$ and $gH$ are distinct cosets 
        in $\langle g, H \rangle$, so $| \langle g, H \rangle | \geq
            2 | H |$. If $g$ has prime-power order $p^n$, then 
        $\langle g \rangle \cap H = \langle g^{(p^i)} \rangle$ for some $i$.
        By assumption, $g^{(p^{n-1})} \notin H$, so $\langle g \rangle \cap H = 1$.
        Thus, the cosets $H, gH, g^2 H, \ldots , g^{p^n-1} H$ are
        all distinct in $\langle g, H \rangle$, and therefore  
        $| \langle g, H \rangle | \geq p^n | H |$. 
    \end{proof}
    
    The next technical lemma plays an important role in our proofs to justify that we have constructed the full $p$-cores (i.e.\ maximal normal $p$-subgroups) $O_p(N)$ of the various $p$-local subgroups $N$ of $\MM$.
    
    \begin{lemma}
        \label{odd_ext} Let $G$ be a finite group, $x\in G$ with prime order $p$, and $N \unlhd N_G(\langle x\rangle)$ with $|N|=p^{2k + 3}$ for some $k\geq 0$. Suppose $x\in N$ and there exist $y, \ell, g_1, \ldots ,  g_k, h_1, \ldots , h_k \in N \cap C_G(x)$ of order $p$ and $\sigma \in N_G(\langle x,y\rangle)$ such that $y \notin \langle x \rangle$; all $g_i, h_i$ commute with $y$, whereas $\ell$ does not; all $g_j$ commute modulo $\langle x \rangle$   with all $g_i$, $g_i^{\sigma}$, $h_i$; all $g_j$ commute with $h_i^{\sigma}$ modulo $\langle x\rangle$ when $i < j$,  but not when $i = j$.
        Then the following hold.
        \begin{ithm}
          \item Every $S \subseteq \{ x, y, g_1, \ldots , g_k, h_1, \ldots ,
            h_k, g_1^{\sigma}, \ldots , g_k^{\sigma} \}$ generates
            a $p$-group of order at least $p^{| S |}$.
          \item The group $N$ is generated by $\{x, y, g_1, \ldots , g_k, h_1, \ldots, h_k, \ell \}$.
           \item The group $\langle x, y, g_1, \ldots , g_k, h_1, \ldots , h_k, g_1^{\sigma}, \ldots
            g_k^{\sigma}, h_1^{\sigma}, \ldots , h_k^{\sigma} \rangle$ is a normal $p$-subgroup of $C_G(\langle x,y\rangle)$.
        \end{ithm}
    \end{lemma}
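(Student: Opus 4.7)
The plan is to prove (a), (b), (c) in sequence; the main work is in (a), with (b) and (c) following by short arguments.

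For (a), I would proceed by iteratively applying Lemma~\ref{ext_lemma}. A preliminary observation used throughout is that $\langle x, y\rangle$ is central in the subgroup generated by any subset of $\{x, y, g_i, h_i, g_i^\sigma, h_i^\sigma\}$: for $g_i, h_i$ this is immediate from the hypotheses together with $\langle x\rangle \leq Z(N)$ (which holds since $N$ is a $p$-group normalising the prime-order subgroup $\langle x\rangle$, so acts trivially on it); for the $\sigma$-conjugates I would use the identity $[a^\sigma, b] = [a, b^{\sigma^{-1}}]^\sigma$ with $b \in \{x, y\}$, combined with $\sigma \in N_G(\langle x, y\rangle)$ (so $b^{\sigma^{-1}} \in \langle x, y\rangle$) and $a \in C_G(\langle x, y\rangle)$ for $a \in \{g_i, h_i\}$. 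I would then order the elements of $S$ as $x, y, g_1, h_1, g_1^\sigma, g_2, h_2, g_2^\sigma, \ldots$ and at each step show that the new element $s$ lies outside the subgroup $H$ generated by the preceding ones, whereupon Lemma~\ref{ext_lemma} (with $s$ of prime order $p$) gives $|\langle H, s\rangle| \geq p|H|$. The non-containment is verified via the commutator structure modulo $\langle x\rangle$: for instance, if we had $g_j \in H_{\text{prior}}$ where $H_{\text{prior}}$ is the subgroup generated by $x, y$ and all earlier $g_i, h_i, g_i^\sigma$, then computing $[g_j, h_j^\sigma]$ (using that $x$, $y$ and all those earlier generators commute with $h_j^\sigma$ modulo $\langle x\rangle$, by the hypothesis on $[g_i, h_j^\sigma]$ for $i<j$ and on $[g_i^\sigma, h_j^\sigma] = [g_i, h_j]^\sigma \in \langle x,y\rangle$) would force $[g_j, h_j^\sigma] \in \langle x\rangle$, contradicting the hypothesis. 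Analogous arguments using $h_j^\sigma$ or $g_j^\sigma$ as a ``test element'' handle the other generators.

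For (b), I would apply (a) with $S = \{x, y, g_1, \ldots, g_k, h_1, \ldots, h_k\}$ of size $2k+2$ to obtain a subgroup $H \leq N$ of order at least $p^{2k+2}$. Since every generator of $H$ commutes with $y$ but $\ell$ does not, $\ell \notin H$. A further application of Lemma~\ref{ext_lemma} gives $|\langle H, \ell\rangle| \geq p|H| \geq p^{2k+3} = |N|$, and since $\langle H, \ell\rangle \leq N$, we conclude $\langle H, \ell\rangle = N$.

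For (c), write $K$ for the subgroup in the statement. Since $N$ is a $p$-group with $\langle x\rangle \leq Z(N)$, we have $N \cap C_G(\langle x, y\rangle) = C_N(y)$, which has order at most $p^{2k+2}$ because $\ell \in N \setminus C_N(y)$. By (a), $\langle x, y, g_i, h_i\rangle \leq C_N(y)$ has order at least $p^{2k+2}$, so the two coincide. Assuming (as holds in the applications) that $\sigma$ normalises $\langle x\rangle$, conjugation by $\sigma$ preserves the normal subgroup $N$ of $N_G(\langle x\rangle)$, and hence $g_i^\sigma, h_i^\sigma \in N \cap C_G(\langle x, y\rangle) = C_N(y) = \langle x, y, g_i, h_i\rangle$. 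Therefore $K = C_N(y)$, which is a normal $p$-subgroup of $C_G(\langle x, y\rangle)$ because $N \unlhd C_G(x) \supseteq C_G(\langle x, y\rangle)$. The main obstacle is the detailed case analysis in (a), where the asymmetric commutator structure (in particular the precise conditions under which $[g_j, h_i^\sigma]$ does or does not lie in $\langle x\rangle$) must be tracked carefully at each step.
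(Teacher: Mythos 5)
Your overall strategy --- adjoining the generators one at a time and using Lemma~\ref{ext_lemma} together with a ``test element'' whose commutator modulo $\langle x\rangle$ detects non-membership --- is the same as the paper's, and your parts (b) and (c) are close to the paper's in outline. But part (a) has a genuine gap: the order in which you adjoin the generators is incompatible with the deliberately asymmetric hypotheses. You interleave the generators as $x,y,g_1,h_1,g_1^\sigma,g_2,h_2,g_2^\sigma,\dots$, so that when you adjoin $g_j$ and test against $h_j^\sigma$ you need every earlier generator to commute with $h_j^\sigma$ modulo $\langle x\rangle$; in particular you need $[g_i,h_j^\sigma]\in\langle x\rangle$ for $i<j$ and some control of $[h_i,h_j^\sigma]$. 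Neither is hypothesised: the lemma only gives $[g_j,h_i^\sigma]\in\langle x\rangle$ for $i<j$ (the $g$-index \emph{larger} than the $h$-index) and says nothing at all about commutators of the $h_i$ with the $h_j^\sigma$. Your appeal to ``the hypothesis on $[g_i,h_j^\sigma]$ for $i<j$'' transposes the indices. A second problem in the same step: $[g_i^\sigma,h_j^\sigma]=[g_i,h_j]^\sigma$ only lands in $\langle x\rangle^\sigma\le\langle x,y\rangle$, since $\sigma$ is assumed to normalise $\langle x,y\rangle$ but not $\langle x\rangle$, and this breaks the mod-$\langle x\rangle$ bookkeeping. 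The paper avoids all of this by ordering the generators as $x,y,g_k,\dots,g_1,h_1,\dots,h_k,h_1^\sigma,\dots,h_k^\sigma$ (all $g$'s first, in \emph{decreasing} index order, then the $h$'s, then the $h^\sigma$'s), taking a minimal counterexample and removing the \emph{latest} element present: with that ordering the commutations required are exactly the hypothesised ones, and the case of removing an $h_i$ is handled by first conjugating the whole remaining set by $\sigma$ and testing against $g_i$ --- a step absent from your sketch, and one that would fail for your ordering because it produces uncontrolled elements $g_i^{\sigma^2}$.

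Two smaller points. In (c) you import the extra assumption that $\sigma$ normalises $\langle x\rangle$ so as to get $N^\sigma=N$ and hence that the group in question equals $C_N(y)$; the lemma does not grant this, and the paper instead shows $A=N\cap C_G(\langle x,y\rangle)\unlhd C_G(\langle x,y\rangle)$ and $A^\sigma\unlhd C_G(\langle x,y\rangle)^\sigma=C_G(\langle x,y\rangle)$, so that $\langle A,A^\sigma\rangle=AA^\sigma$ is a normal subgroup of order dividing $|A|^2=p^{4k+4}$ --- it need not equal $A$. Finally, your argument for (a), like the paper's counting argument, only bounds $|\langle S\rangle|$ from below; that $\langle S\rangle$ is actually a $p$-group is not automatic for a group generated by elements of order $p$, and the paper deduces it from part (c).
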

    
    \begin{proof}
     \begin{iprf} 
\item        Let $\phi\colon C_G(x)\to C_G(x)/\langle x\rangle$ be the natural homomorphism. We first show that $S$ generates
        a group (not necessarily a $p$-group) of order
        at least $p^{| S |}$. Suppose, for a contradiction, that $S$ is
        a counterexample of minimal size, that is, $|\langle S\setminus\{u\}\rangle|\geq p^{|S|-1}$ for all $u\in S$;  note that $|S|\geq 1$ since $| \langle \emptyset \rangle | = p^0$. Let $u\in S$ be the element that occurs latest in the list $x, y, g_k, \ldots , g_1, h_1,
            \ldots , h_k, h_1^{\sigma}, \ldots , h_k^{\sigma}$; note the reversed labels for the $g_i$. The following case distinction shows that  $u\notin \langle S\setminus\{u\}\rangle$.
        \begin{iprff} 
            \item[$\bullet$] If $u = x$, then  $S \setminus \{ u \}=\emptyset$, and  so $x\notin \langle S\setminus\{u\}\rangle$.
            \item[$\bullet$] If $u = y$, then  $S\setminus\{u\} \subseteq \{ x \}$, and $y\notin \langle x\rangle$ by assumption.
            \item[$\bullet$] If $u = g_i$ for some $1 \le i \le k$, then
                $S \setminus \{ u \}$ consists of
                at most $x$, $y$, and $g_j$ for various $j > i$. Note that
                $g_m^{\sigma}, h_m^{\sigma}$ lie in $(
                    C_G(x) \cap C_G(y) )^{\sigma} = C_G(\langle x, y \rangle^{\sigma}) =
                    C_G(\langle x, y\rangle)$, as do $g_m, h_m$ for all $1 \le m \le k$.
                Thus
                $\phi ( h_i^{\sigma} )$ commutes
                with  $\phi ( x )$,
                $\phi ( y )$, and 
                $\phi ( g_j )$ for $j > i$, so  $\phi (
                    \langle S \setminus \{ u \} \rangle
                    )$ centralises $\phi (h_i^{\sigma})$. Since $u=g_i$ does not commute with $h_i^\sigma$ modulo $\langle x\rangle$ by assumption, we deduce $u\notin \langle S\setminus\{u\}\rangle$.
            \item[$\bullet$] If $u = h_i$ for some $1 \le i \le k$, then
                $S \setminus \{ u \}$ consists of
                at most $x$, $y$, the $g_m$, and various $h_j$
                for $j < i$. Consider $\phi ( (
                    S \setminus \{ u \}
                )^{\sigma} )$. Each $\phi (
                    g_m^{\sigma}
                )$ and $\phi ( h_j^{\sigma} )$ with $i > j$ commutes with $\phi ( g_i )$ by assumption,
                as do $\phi ( x^{\sigma} ), \phi (
                    y^{\sigma}
                ) \in \phi ( \langle x, y \rangle )$.
                It follows that $\phi ( \langle (
                    S \setminus \{ u \}
                )^{\sigma} \rangle )$ centralises $\phi ( g_i )$. Since $\phi ( u^{\sigma} ) =
                    \phi ( h_i^{\sigma} )$ does not commute
                with $\phi ( g_i )$ by assumption, we deduce $u\notin\langle S\setminus\{u\}\rangle$.
            \item[$\bullet$] If $u = h_i^{\sigma}$ for some $1 \le i \le k$, then
                $S \setminus \{ u \}$ consists of
                at most $x$, $y$, the $g_m$, the $h_m$, and $h_j^{\sigma}$
                for $j < i$. All of $\phi ( x )$,
                $\phi (y)$, $\phi (g_m)$, $\phi (h_m)$, and $\phi (h_j^{\sigma})$
                commute with $\phi (g_i)$ by assumption and,  as before,
                 $\phi ( \langle S \setminus \{ u \} \rangle )$ centralises  $\phi ( g_i )$.
                As before,  $u=h_i^\sigma$ is not contained in  $\langle S\setminus\{u\}\rangle$.
        \end{iprff}
Thus,  $u \notin \langle S \setminus \{ u \} \rangle$, and \lem{ext_lemma} yields $| \langle S \rangle | \ge p |
            \langle S \setminus \{ u \} \rangle
            | \ge p^{1 + | S | - 1 } = p^{| S |}$, which contradicts that $S$ is  counterexample. Thus,  $|\langle S\rangle|\geq p^{|S|}$ for all $S$. It will follow from c) that $\langle S\rangle$ is a $p$-group.
        \item The group  $A=\langle x, y, g_1, \ldots g_k, h_1, \ldots h_k\rangle$ lies in $N \cap C_G(y)$ and has size  $|A|\geq p^{2k+2}$ by a). The element    $\ell \in N \setminus C_G(y)$ has order $p$, so $\langle N \cap C_G(y), \ell \rangle\leq N$ has size at least $p | N \cap C_G(y) | \ge p^{1+2k+2} =
        | N |$ by  \lem{ext_lemma}. This implies that $A = N \cap C_G(y)$  has order $p^{2k+2}$ and $N=\langle A,\ell\rangle$, as claimed.
      \item We continue with the previous notation. All generators of $A=N \cap C_G(y)$ lie in $C_G(x)$, so  $A=N\cap C_G(\langle x,y\rangle)$. Since $N \unlhd N_G (\langle x\rangle)$, we have  $A\unlhd C_G(\langle x,y\rangle)$.
        But then $A^{\sigma} \unlhd C_G(\langle x,y\rangle)^{\sigma} =
            C_G(\langle x, y \rangle^{\sigma}) = C_G(\langle x, y\rangle)$, which proves the first claim of c), that is,
        \[ \langle x, y, g_1, \ldots , g_k, h_1, \ldots , h_k, g_1^{\sigma},
            \ldots g_k^{\sigma}, h_1^{\sigma}, \ldots , h_k^{\sigma} \rangle =
            \langle A, A^{\sigma} \rangle \unlhd C_G(\langle x,y\rangle); \]
        note that the generators $x^{\sigma}, y^{\sigma} \in \langle x, y \rangle$
        are redundant. Moreover, $A,A^\sigma\unlhd \langle A,A^\sigma\rangle$, and so $| \langle
            A, A^{\sigma}
        \rangle | = | AA^{\sigma} | =| A ||
            A^{\sigma}|/| A \cap A^{\sigma} |$ divides
        $| A | | A^{\sigma} | = | A |^2 =
            p^{4k+4}$. Thus, $\langle A, A^{\sigma} \rangle$ is a $p$-group.\qedhere
     \end{iprf}
    \end{proof}

    
    \subsection{The $p$-local subgroups}

For completeness, we recall the classification of the odd-local maximal subgroups of $\MM$, with the correction discussed in Section \ref{correction} (that is, with the addition of $59{:}29$).
    
    \begin{proposition}[\!{\cite{odd_local}}; \cite{survey};
        \cite{subgroups_A5}; \S\ref{correction}]
        \label{3localprop}
        \label{5localprop}
        \label{7localprop}
        \label{bigoddlocalprop}
        The odd-local maximal subgroups of $\MM$ are, up to conjugacy, 
        the normalisers $N_\MM(E)$  listed in Table~\ref{max_odd_tab}.
    \end{proposition}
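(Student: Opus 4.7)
The plan is to treat this proposition as a literature-synthesis result rather than an independent proof: the classification of odd $p$-local subgroups of $\MM$ (up to conjugacy) is essentially the main theorem of Wilson's paper \cite{odd_local}, building on earlier work of Norton, so the proof will amount to citing that classification and explaining how the maximality status of each entry has been settled in the intervening literature. I would first recall from \cite{odd_local} the complete list of conjugacy classes of odd $p$-local subgroups $N_\MM(E)$ for $p$ odd, noting that this list coincides with the entries of Table~\ref{max_odd_tab}. At this stage every entry is proved to be $p$-local, but not every entry is automatically known to be a maximal subgroup of $\MM$.

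The next step would be to go through the open maximality questions left by \cite{odd_local} and point to where each was resolved. The most delicate ones concern potential overgroups of $p$-local subgroups inside almost simple maximal subgroups (for example the $\alt{5}$-related cases resolved in \cite{subgroups_A5} and related work), so I would explicitly cite \cite{survey} for the overall summary and \cite{subgroups_A5} for the $\alt{5}$-containment questions that were settled computationally. This step is essentially bookkeeping, and it is the main place where one has to be careful that nothing has been overlooked in the decades-long chain of papers; the survey \cite{survey} is the key tool here.

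Finally, I would invoke Section~\ref{correction} to handle the one novel change: the list in \cite{odd_local,survey} would have included $\psl{2}{59}$ as a non-local maximal subgroup, but \thm{psl259} shows that $\MM$ has no such subgroup and instead contains a unique class of maximal subgroups $59{:}29$, which is $59$-local. Adding this class to the existing list gives exactly Table~\ref{max_odd_tab}.

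The hard part is not any single step of this outline but rather ensuring completeness: verifying that no other $p$-local subgroup of $\MM$ could slip past the classification of \cite{odd_local} once the existence of $59{:}29$ is taken into account. For every other prime $p\in\{3,5,7,11,13,29,41,71\}$ the classification in \cite{odd_local} is unaffected by the correction, since the argument there for each prime is self-contained; the only new input needed is at $p=59$, and this is exactly what is supplied in Section~\ref{correction}. Thus the proof is a one-line appeal to \cite{odd_local,survey,subgroups_A5} together with a pointer to \thm{psl259} and Section~\ref{correction}.
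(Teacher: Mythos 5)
Your proposal matches the paper's treatment: the proposition is not given an independent proof but is justified exactly as you describe, by citing Wilson's classification \cite{odd_local} for the list of maximal odd $p$-local subgroups, \cite{survey} and \cite{subgroups_A5} for the later resolution of the maximality questions, and Section~\ref{correction} for the addition of $59{:}29$ (the table's citation column records the per-entry references). The only slight imprecision is your remark that $\psl{2}{59}$ would have appeared in the odd-local list --- it is non-local, and the relevant point is just that its claimed existence previously blocked the maximality of the $59$-local subgroup $59{:}29$ --- but this does not affect the argument.
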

    
    \def\chrule{\addlinespace[2pt] \cline{1-3}  \addlinespace[2pt]}
    \newlength\citcomw
    \settowidth\citcomw{\cite[Theorems~3, 7.1]{odd_local}; $E$ extends
      $3^7 < 3^{1+12} \le \nm{\textup{3B}}$xxx}
    \begin{table}[t]
        \begin{tabular}{lll}
            \toprule
            $\nm{E}$ & $E$ & Citations and Comments \\
            \midrule
            $\normThreeA$ & 3A & \multirow{4}{\citcomw}{see \cite[Theorem~3]{odd_local};
                for 3A$^2$, see also the table above \cite[Proposition 2.2]{odd_local}} \\
            $\normThreeB$ & 3B & \\
            $\normThreeC$ & 3C & \\
            $\normThreeASq$ & 3A$^2$ & \\ \cmidrule{3-3}
            $\normThreeBSq$ & 3B$^2$ & \multirow{2}{\citcomw}{%
                see \cite[Theorems~3, 6.5]{odd_local}; the structure of $E$ is
                insufficient for the maximality and structure of $\nm{E}$.} \\
            $\normThreeBCu$ & 3B$^3$ & \\ \cmidrule{3-3}
            $\normThreeArk$ & $3^8$ & see \cite[Theorems~3, 7.1]{odd_local}; $E$ extends
                $3^7 < 3^{1+12} \le \nm{\textup{3B}}$ \\ \chrule
            $\normFiveA$ & 5A & \multirow{5}{\citcomw}{see \cite[Theorem~5]{odd_local};
                for 5A$^2$, 5B$^2$ see also the first two tables in \cite[\S9]{odd_local}
            } \\
            $\normFiveB$ & 5B & \\
            $\normFiveASq$ & 5A$^2$ & \\
            $\normFiveBSq$ & 5B$^2$ & \\
            $\normFiveBCu$ & 5B$^3$ & \\ \cmidrule{3-3}
            $\normFiveBBq$ & 5B$^4$ & see \cite[Theorem~5]{odd_local}; $E$ extends
                $5^2 < 5^{1+6} \le \nm{\textup{5B}}$ \\ \chrule
            $\normSevenA$ & 7A & \multirow{3}{\citcomw}{see \cite[Theorem~7]{odd_local};
                for 7A$^2$ see also the table at the start of \cite[\S10]{odd_local}} \\
            $\normSevenB$ & 7B & \\
            $\normSevenASq$ & 7A$^2$ & \\ \cmidrule{3-3}
            $\normSevenBSq$ & 7B$^2$ & see \cite[Theorem~7]{odd_local};
                $E < 7^{1+4} \lhd \nm{\textup{7B}}$ \\ \cmidrule{3-3}
            $\normSevenBSqOut$ & 7B$^2$ & see \cite[Theorem~7]{odd_local};
                $E \not< 7^{1+4} \lhd \nm{\textup{7B}}$ \\ \chrule
            $\normElevenSq$ & $11^2$ & \multirow{6}{\citcomw}[-5pt]{see \cite[\S11]{odd_local};
                note that $\MM$ has unique conjugacy classes of elements of orders
                11, 41, and that the two classes of elements of order 59 are
                inverses } \\ \cmidrule{1-2}
            $\normThirteenA$ & 13A & \\
            $\normThirteenB$ & 13B & \\
            $\normThirteenBSq$ & 13B$^2$ & \\ \cmidrule{1-2}
            $41{:}40$ & 41 & \\ \cmidrule{1-2}
            $59{:}29$ & 59 & \\ 
            \bottomrule
        \end{tabular} 

\vspace*{1ex}
        
        \caption[The odd-local maximal subgroups $N_\MM(E)$ of $\MM$.]{\label{max_odd_tab}\small{The odd-local maximal subgroups $N_\MM(E)$ of $\MM$.}}
    \end{table}
   
    The following subsections consider each of the odd-local maximal subgroups separately and prove that the generators in the accompanying code do indeed generate a maximal subgroup of the correct {shape}. For a more efficient  exposition, we first outline the common steps of these proofs.

    \begin{procedure}\label{provesg}
     Let $N=N_\MM(E)$ be one of the $p$-local maximal subgroups in Table \ref{max_odd_tab}. The accompanying code provides two lists, $L_E$ and $L_N$, of elements in $\MM$ in \mmgroup{} format. Let $\tilde E=\langle L_E\rangle$ and $\tilde N=N_\MM(\tilde E)$. We aim to prove that $E\cong \tilde E$ and  $N\cong \tilde N=\langle L_N\rangle$. We usually proceed as follows, and explain alternative approaches in the relevant proofs.
    \begin{iprf}
    \item[(1)] We first confirm that $\tilde E$ is an elementary abelian group of the correct type, see column ``$E$'' in Table \ref{max_odd_tab}. For this we confirm computationally that $\tilde E$ is elementary abelian of the correct size, and, if required, that all its non-identity elements lie in the correct conjugacy class of $\MM$. We note that \mmgroup{} provides the functionality to compute the values of the unique irreducible $198663$-dimensional complex character $\chi_\MM$ of $\MM$ for elements that lie in the maximal subgroup $\GG=N_{\MM}(2\mt{B})$.  Once the type of $\tilde E$ is established, the structure and maximality of $N_{\MM}(\tilde E)\cong N$ is usually determined by Proposition \ref{bigoddlocalprop}. 
    \item[(2)] A direct calculation in \mmgroup{} shows that $L_N$ normalises $\tilde E$, and  the next step is to show that $\langle L_N\rangle$ contains  $C_\MM(\tilde E)$. This centraliser is usually an extension of the form $A.B$, where the structures of $A$ and $B$ are known. We exhibit words in the elements of $L_N$ whose cosets with respect to $A$ generate a group isomorphic to $C_\MM(\tilde E)/A\cong B$; if $C_\MM(\tilde E)=A{:}B$ is known to be a split extension, then we often exhibit words in the elements of $L_N$ that generate $B$. The last step is to construct, as words in elements of $L_N$, generators of $A$. Here it is useful that $\gcd(|A|,|B|)$ is usually small and that $A$ contains low-index $p$-subgroups. In conclusion, at the end of this step we have shown that  $C_\MM(\tilde E)\leq \langle L_n\rangle \leq \tilde N=N_\MM(\tilde E)$. 
    \item[(3)] Finally, we consider the homomorphism $\phi \colon \langle L_N\rangle \to  \aut{\tilde E}$ induced by conjugation. Step (2) establishes that $\cm{\tilde E} = \ker{\phi}$, so the image of $\phi$ is isomorphic to a subgroup of $N_\MM(\tilde E)/C_\MM(\tilde E)$. A direct calculation allows us to determine the size of the image of $\phi$, and if this equals $|N_\MM(\tilde E)/C_\MM(\tilde E)|$, then $\langle L_N\rangle = \tilde N$ is established.
    \end{iprf}
\end{procedure}
    We use the notation and approach of Procedure \ref{provesg} in each of the following proofs, sometimes with minor modifications. We always denote by $L_E$ and $L_N$  the lists of elements provided in the accompanying code, and always write \[\tilde E=\langle L_E\rangle\quad\text{and}\quad \tilde N=N_\MM(\tilde E).\]
    We then prove that $\tilde N=\langle L_N\rangle$ is the required maximal subgroup. 
    We usually do not comment on obvious verifications, such as checking that the elements in $L_N$ normalise $\tilde E$, etc, but these  verifications are done in the corresponding section of the accompanying Jupyter Notebook.

    \sgNEW{$\normThreeA$}{norm3A}{
      The accompanying code defines $L_E=\{x_3\}$ and $L_N=\{g_3,h_3\}$. The group $N_\MM(\langle L_E\rangle)$ is generated by $L_N$ and  isomorphic to the maximal subgroup $\normThreeA$ of $\MM$.
    }{The next theorem is  adapted from the accompanying code of \cite{dlp}.}

    \begin{proof}The element $x_3\in \GG$ has order $3$ and  $\chi_\MM ( x_3 ) = 782$, so $x_3\in 3\mt{A}$. Therefore,  $N_{\MM}(\tilde E )\cong\normThreeA$ and $C_\MM(\tilde E)\cong \tilde E$.  We confirm  $x_3 = ( g_3 h_3 g_3 h_3^3 g_3 h_3^5 )^{28}\in \langle L_N\rangle$. To ensure that all cosets of $N_\MM(\tilde E)/\tilde E$ have representatives in $\langle L_N\rangle$, we consider the elements  $a = g_3 h_3$ and    $b = ( g_3 h_3 )^5 h_3$ of order $29$ and $70$, respectively. It follows that also $aC_\MM(\tilde E)$ and $bC_\MM(\tilde E)$ have order $29$ and $70$, respectively, and they are elements in $\FI\cong N_\MM(\tilde E)/\tilde E$.  Atlas information shows that $\FI$  has   a unique class of maximal subgroup whose order is divisible by $29$ and $70$, but this subgroup has no elements of order $70$. We deduce that  $aC_\MM(\tilde E)$ and $bC_\MM(\tilde E)$ generate a group isomorphic to $\FI$, and so $N_\MM(\tilde E)=\langle L_N\rangle$.
    \end{proof}


    \sg{$\normThreeB$}{norm3B}{ The accompanying code defines $L_E=\{x_{3b}\}$ and $L_N=\{g_{3b},h_{3b}\}$. The group $N_\MM(\langle L_E\rangle)$ is generated by $L_N$ and  isomorphic to the maximal subgroup $\normThreeB$ of $\MM$.
    }
     
    \begin{proof}
The element $x_{3b}\in \GG$ has order $3$ and
$\chi_\MM ( x_{3b} ) = 53$, so $x_{3b}\in 3\mt{B}$. Thus, $\tilde E$ has the correct type and $N_{\MM}(\tilde E )\cong\normThreeB$ is a maximal subgroup. The elements $g_{3b}$ and $h_{3b}$ normalise $\tilde E$ and have orders $56$ and $66$, respectively, and therefore some of their powers project to elements of orders $28$ and $11$
    in the factor group $\SZ.2$ under the natural projection map $\varphi$. Atlas information shows that $\SZ.2$ has a unique class of maximal subgroups
    of order divisible by both $28$ and $11$, namely $\SZ$, but this subgroup has no element of order $28$. This implies that $\langle L_N\rangle$ contains a complete set of coset representatives of $U = \ker \varphi \cong 3^{1+12}.2$ in $\tilde N\cong \normThreeB$. It remains to show that $U$ lies in $\langle L_N\rangle$. Since $\SZ.2$ contains
    no elements of order $56$ and $\gcd{( 56, 3^{1+12} \cdot 2 )} = 2$, we have $g_{3b}^{28}\in U$. Since $g_{3b}^{28}$ has order $2$, it lies in $U\setminus 3^{1+12}$. Thus, it remains to show that $\langle L_N\rangle$ contains $V=3^{1+12}$. The element $r=g_{3b}^{28} (g_{3b}^{28})^{h_{3b}}$ lies in $V$, and in the accompanying code we express $x_{3b}$ as a word in conjugates of $r$. We also define elements  $y_{3b}$, $\ell$, $g_1,\ldots,g_5$, $h_1,\ldots,h_5$, $\sigma$ as words in elements of $L_N$ and apply Lemma~\ref{odd_ext} to show that these generate $V$. This completes the proof.
    \end{proof}

    \sgNEW{$\normThreeC$}{norm3C}{
 The accompanying code defines $L_E=\{x_{3c}\}$ and $L_N=\{g_{3c},h_{3c}\}$. The group $N_\MM(\langle L_E\rangle)$ is generated by $L_N$ and  isomorphic to the maximal subgroup $\normThreeC$ of $\MM$.
    }{The next theorem is proved in the with elements adapted from \cite{dlp}.}

    \begin{proof} We exhibit an element $c\in\MM$ such that $x_{3c}^c\in \GG$ has order $3$ and $\chi_\MM ( x_{3c}^c ) = -1$, proving that $\tilde E$ has the correct form and $N_{\MM}(\tilde E )\cong \normThreeC$. Let $a = g_{3c}^3$ and $b = h_{3c}^4$. The elements $ab$ and $abab^2 ab^2 abab^2 abababab^2 ab^2 abab$ have orders $19$ and $31$, respectively, and their cosets with respect to $\sym{3}$ generate a subgroup of $\Th$ containing elements of these orders. Atlas information implies that this subgroup is the whole group $\Th$. It remains to show that $\langle L_N\rangle$ contains $\sym{3}$. Since $\tilde E$ is normal in $N_\MM(\tilde E)$, it must lie in $\sym{3}$. Now confirming that   $x_{3c} = g_{3c}^4$ and $h_{3c}^3$ generate $\sym{3}$ completes the proof.    \end{proof}

    \sg{$\normThreeASq$}{norm3A2}{
The accompanying code defines $L_E=\{x_{3},y_3\}$ and $L_N=\{g_{3a2},h_{3a2}\}$. The group $N_\MM(\langle L_E\rangle)$ is generated by $L_N$ and  isomorphic to the maximal subgroup $\normThreeASq$ of $\MM$.
    }
    
    \begin{proof}
    We confirm that $\tilde E$ is $3\mt{A}$-pure of size $3^2$. There is a unique class of such subgroups, see \cite[\S 2]{odd_local}, so $\tilde N\cong \normThreeASq$. Using the auxiliary element   $g = h_{3a2}^2 (g_{3a2} h_{3a2})^2 g_{3a2}^{-1} h_{3a2}^{-1}$, we confirm that 
      $x_{13} = (g_{3a2}^2 h_{3a2})^2 
        g_{3a2}^4 h_{3a2}^{-2}$, $x_2 = ((h_{3a2} g_{3a2})^2
            h_{3a2} g_{3a2}^{-1})^{15}$, and
    $x_{14} = (g x_2 g^{-1} x_2 g)^3$  have orders $13$, $2$, and $14$, respectively, and centralise $\tilde E$. Thus,  they lie in the factor
            $\orth+{8}{3}$ of $C_\MM(\tilde E) \cong 3^2 \times \orth+{8}{3}$. By \cite[p. 140]{atlas}, the only maximal subgroup
    of $\orth+{8}{3}$ with order divisible by both $13$ and $14$ is
    $\orth{}{7}{3}$, in which the centraliser of an element of order $7$ has
    order $14$. Confirming that $x_{14}^2$ (of order $7$) commutes
    with $x_2 (\neq x_{14}^7)$ as well as with $x_{14}$, we conclude that
    $\orth+{8}{3}$ is contained in $\tilde E$.  We then confirm  that
    $x_3 = (h_{3a2} g_{3a2}^3 (h_{3a2} g_{3a2})^2)^8$ and
    $y_3 = (h_{3a2} g_{3a2} h_{3a2} g_{3a2}^3 h_{3a2} g_{3a2})^8$, and since $\orth+{8}{3}$ has a trivial centre, $\tilde E\leq \langle L_N\rangle$, and so $C_\MM(\tilde E)\leq \langle L_N\rangle$. Lastly, we obtain $48=|2.\sym{4}|$ distinct automorphisms induced by the conjugation action of $\langle L_N\rangle$ on $\tilde E$, which proves  that $\tilde N=\langle L_N\rangle$.
    \end{proof}

    \sg{$\normThreeBSq$}{norm3B2}{
The accompanying code defines $L_E=\{x_{3b},y_{3b}\}$ and $L_N=\{g_{3b2},h_{3b2}\}$. The group $N_\MM(\langle L_E\rangle)$ is generated by $L_N$ and  isomorphic to the maximal subgroup $\normThreeBSq$ of $\MM$.
    }
    
    \begin{proof}
      We check that $\tilde E\cong 3^2$ and $\langle L_N\rangle \leq \tilde N$. By \cite[Theorem~5.2]{odd_local}, the normaliser $\tilde N$ is contained
    in a maximal subgroup with structure $\normThreeA$, $\normThreeB$, $\normThreeASq$, $\normThreeBSq$, or
    $\normThreeArk$; since only the fourth of these groups contains an element of order  $|g_{3b2}|=88$, we must have $\tilde N\leq \normThreeBSq$. (We note that \cite[Theorem~5.2]{odd_local} refers to maximal $3$-local subgroups,
    but by \prp{3localprop}, all such subgroups of $\MM$ are maximal.) It remains to prove that $|\tilde N|\geq |\normThreeBSq|= 3^{17} | \MT{11} | \cdot 48$. We first confirm that  $\langle h_{3b2}^6, (g_{3b2}^8 h_{3b2}^3 g_{3b2}^{32})^2 \rangle\cong \MT{11}$ by verifying a presentation for $\MT{11}$.  We then establish elements in $\langle L_N\rangle$ that satisfy the assumptions of \lem{odd_ext} (with $x=x_{3b}$ and $y=y_{3b}$) and so generate a $3$-group of size at least $3^{17}$ that is normal in $C_\MM(\tilde E)$.  Together with the simple group $\MT{11} \le C_\MM(\tilde E)$
    exhibited above, we have generated  a subgroup of $C_\MM(\tilde E) \cap \langle L_N\rangle$
    of order at least $3^{17} | \MT{11} |$. We conclude the proof by enumerating the $48$ distinct automorphisms of $\tilde E$ induced by conjugation by $\langle L_N\rangle$. 
    \end{proof}

    \sg{$\normThreeBCu$}{norm3B3}{
The accompanying code defines $L_E=\{x_{3b},y_{3b},z_{3b}\}$ and $L_N=\{g_{3b3},h_{3b3}\}$. The group $N_\MM(\langle L_E\rangle)$ is generated by $L_N$ and  isomorphic to the maximal subgroup $\normThreeBCu$.
    }
    
    \begin{proof} 
     \thm{norm3B2} shows that 
    $\langle x_{3b}, y_{3b} \rangle \cong 3^2 < 3^{1+12} \unlhd
     \nma{x_{3b}} \cong \normThreeB$. Setting $c=g_{3b3} h_{3b3}^3 g_{3b3} h_{3b3} g_{3b3}$,  we verify that $c\in \cm{x_{3b}}$, that $z_{3b} = y_{3b}^c$ and $z_{3b}$ commutes with $x_{3b}$ and $y_{3b}$, so $\tilde E\cong 3^3$ is a subgroup of $3^{1+12}$ containing the centre. Now \cite[Theorem~6.5]{odd_local} implies that $\tilde N$  belongs to a maximal subgroup of $\MM$ of type $\normThreeB$, $\normThreeBSq$, or $\normThreeBCu$. We confirm $g_{3b3},h_{3b3}\in \tilde N$ and $|h_{3b3}|=104$, which forces $\tilde N\leq \normThreeBCu$. The elements  $a = h_{3b2}^6$ and $b = (g_{3b2}^8 h_{3b2}^3 g_{3b2}^{32})^2$ from the proof of \thm{norm3B2} generate a subgroup isomorphic to $\MT{11}$, which is contained in $N_\MM(\langle x_{3b},y_{3b}\rangle)=\normThreeBSq$. We check that  $u=(aba (b^2 a)^2)^2$ and $v=((ba)^3 (b^2 a)^2 ba)^2$ generate a group of size $9$ that centralises $\tilde E$. In the accompanying code, we establish $15$ elements (as words in elements of $L_N$) that commute with all elements of $L_E$ and lie in the subgroup $3^{2+5+10}\leq N_\MM(\langle x_{3b},y_{3b}\rangle)$. An application of \lem{odd_ext} proves that these elements generate a $3$-subgroup $U$ of $C_\MM(\tilde E)$ of order at least $3^{15}$. By construction,   $|\MT{11}\cap U|=1$, so  $U$ and $\{u,v\}$ together generate a $3$-subgroup of $C_\MM(\tilde E)$ of size at least  $3^{17}$.  We verify that  $g_{3b3}^{39}$ and  $h_{3b3}^{26}$ centralise $\tilde E$, and that the $\langle g_{3b3}^{39},h_{3b3}^{26}\rangle$-class of a particular element  $h_1\in U$ of order $3$ has size $72$.  By the Orbit--Stabiliser Theorem, the size of $\langle g_{3b3}^{39},h_{3b3}^{26}\rangle$ is divisible by $72$, which implies that $\tilde N$  contains a subgroup of $C_\MM(\tilde E)$ with order divisible by $8$. This centraliser is therefore at least
    $8$ times as large as the $3$-subgroup of order $3^{17}$ found above. Accounting
    for the $11232$ automorphisms of $\tilde E$ that we  enumerate by considering the conjugation action of $\tilde N$, we have $|\tilde N| \ge
        3^{17} \cdot 8 \cdot 11232 =  |\normThreeBCu|$, which completes the proof. 
    \end{proof}

    \sg{$\normThreeArk$}{norm3Ark}{Let $L_E=\{x_{3b},y_{3b},g_1,\ldots,g_4,h_4,h_4^\sigma\}$ and $L_N=\{g_{3^8},h_{3^8}\}$ be as in the accompanying code. The group $N_\MM(\langle L_E\rangle)$ is generated by $L_N$ and  isomorphic to the maximal subgroup $\normThreeArk$.
    }
    
    \begin{proof}
Lemma \ref{odd_ext} implies that $\tilde E$ has size $3^8$; we verify that the generators have order $3$ and commute, hence $\tilde E\cong 3^8$.  To prove that the elements in $L_N$ normalise $\tilde E$ is tedious, hence the accompanying code provides explicit words in elements of $L_E$ that illustrate how the elements in $L_N$ act via conjugation. This proves that $\tilde N\leq N_\MM(\tilde E)$ and also yields an explicit matrix representation of the conjugation action of $\langle L_N\rangle$ on $\tilde E$. A computation in the computer algebra system GAP \cite{gap} proves that  $\langle L_N\rangle$ acts on $\tilde E$ as the subgroup $\orth-{8}{3}.2\leq {\GL_8(3)}$.  The only maximal $3$-local subgroup in Table~\ref{max_odd_tab} whose order is divisible by $| g_{3^8} | = 41$ is  $\normThreeArk$, which tells us that $\tilde N\leq \normThreeArk$.    As for demonstrating that $L_N$ generates $\tilde N$, it remains to prove that $L_E\leq \langle L_N\rangle$, which we establish by representing the elements in $L_E$ as words in elements of $L_N$.
    \end{proof}

    \sg{$\normFiveA$}{norm5A}{
    Let $L_E=\{x_5\}$ and $L_N=\{g_5,h_5\}$ be as in the accompanying code. The group $N_\MM(\langle L_E\rangle)$ is generated by $L_N$ and  isomorphic to the maximal subgroup $\normFiveA$.
          }
    
    \begin{proof}
      The element $x_5\in\GG$ has order $5$ and  $\chi_\MM ( x_5 ) = 133$, so $x_5\in 5$A and $N_\MM(\tilde E)\cong\normFiveA$. We confirm that $L_N\leq N_\MM(\tilde E)$ and $h_5^{19}=x_5$.  The elements  $g_5^4$ and $h_5^5$ centralise $x_5$ and have orders $11$ and $19$, respectively, and it follows from the Atlas that they generate the subgroup $\HN\leq N_\MM(\tilde E)$. Lastly,  $x_5^{g_5}=x_5^2$, so $g_5$ induces an automorphism of order $4$ on $\langle x_5\rangle$. Thus, $N_\MM(\tilde E)=\langle L_N\rangle$. 
    \end{proof}

    \sgNEW{$\normFiveB$}{norm5B}{
      Let $L_E=\{x_{5b}\}$ and $L_N=\{g_{5b},h_{5b}\}$ be as in the accompanying code. The group $N_\MM(\langle L_E\rangle)$ is generated by $L_N$ and  isomorphic to the maximal subgroup $\normFiveB$.
    }{The next theorem is adapted from \cite{dlp}.}

    \begin{proof}
      We find  $c\in\MM$ such that $x_{5b}^c\in \GG$ and confirm $\chi_\MM ( x_{5b} ) = 8$, so $x_{5b} \in 5$B and $N_\MM(\tilde E)$ is isomorphic to $\normFiveB$. We also confirm $L_N\leq N_\MM(\tilde E)$. The elements  $a = h_{5b}^4$ and $g_{5b}$ centralise $x_{5b}$, so $a,g_{5b}\in \cm{x_{5b}}$. They have orders $70$ and $3$, and a product $ag_{5b}$ of order $5$. It follows that $a$ and $g_{5b}$  project to elements of orders $7$ and $3$ in $\J2$. The different orders ensure they are not inverses; in particular, their product of order  dividing $| g_{5b} a | = 5$ must project
    to an element of order $5$ exactly. Atlas information now implies that  $\langle g_{5b}, h_{5b} \rangle$ contains
    a representative for each coset of $5^{1+6}{:}2$
    in $\cma{x_5} \cong 5^{1+6}{:}2 \udot \J2$. It follows from the above that $g_{5b}^7$ is an element of order $10$
    in the normal subgroup $5^{1+6}{:}2$ of $\nma{x_{5b}}$, so it and
    its conjugates lie in in $(5^{1+6}{:}2)\setminus 5^{1+6}$. It remains to show that $5^{1+6}$ is contained in $\langle L_N\rangle$. In the accompanying code, we define  $\ell = g_{5b}^7 ( g_{5b}^7 )^{h_{5b}} \in 5^{1+6}$ and write $x_{5b}$ as a word in conjugates of $\ell$. We also define elements  $y_{5b}$, and $g_1,g_2$ and $h_1,h_2$ that  satisfy the criteria of \lem{odd_ext}, which allows us to conclude  $5^{1+6}\leq \langle L_N\rangle$. Since $h_{5b}$ induces an automorphism of order $4$ on  $\langle x_{5b}\rangle$, it follows that $\langle L_N\rangle\cong \normFiveB$.  
    \end{proof}

    \sg{$\normFiveASq$}{norm5A2}{
 Let $L_E=\{x_{5},y_5\}$ and $L_N=\{g_{5a2},h_{5a2}\}$ be as in the accompanying code. The group $N_\MM(\langle L_E\rangle)$ is generated by $L_N$ and  isomorphic to the maximal subgroup $\normFiveASq$.
    }  
    \begin{proof}
       We verify $L_N\leq N_\MM(\tilde E)$ and show that $\tilde E=5{\rm A}^2$ by checking that $x_5, y_5$ commute and that
        $x_5 \in \textup{5A}$ is conjugate to $y_5 \notin \langle x_5 \rangle$
        and to $y_5^i x_5$ for $1 \le i \le 4$ by $h_{5a2}$, $g_{5a2}^4$,
        $h_{5a2} g_{5a2}^2 h_{5a2}^2$, $g_{5a2}^2 h_{5a2} g_{5a2}$,
        and $h_{5a2} g_{5a2} h_{5a2}^2$, respectively. Thus, $L_N\subset N_\MM(\tilde E)\cong \normFiveASq$. Next, we show that
        $(( g_{5a2} h_{5a2} )^2 g_{5a2} h_{5a2}^{-1}
            g_{5a2}^{-2} h_{5a2}^{-1} )^5$ and $(h_{5a2}^2 g_{5a2} h_{5a2} g_{5a2} h_{5a2}^{-2} g_{5a2}^{-1}
            h_{5a2}^{-1} g_{5a2}^{-1})^5$ lie in   $\cma{x_5, y_5}$ which is isomorphic to $5^2 \times \unt{3}{5}$. These elements have orders $7$ and $8$, and Atlas information implies that they generate the factor $\unt{3}{5}$. The factor $5^2$ is  generated
        by $y_5 = ((g_{5a2} h_{5a2}^2)^2 g_{5a2})^{18}$ and
        $x_5 = h_{5a2}y_5 h_{5a2}^{-1}$; note that $\unt{3}{5}$ has trivial centre.
        Finally, we verify that the conjugation action of $\langle g_{5a2}, h_{5a2} \rangle$         on $\langle x_5, y_5 \rangle$ produces  $| 4 \udot 2^2 | | \sym{3} | = 16 \cdot 6 = 96$
        automorphisms of $\langle x_5, y_5 \rangle$. Thus, $N_\MM(\tilde E)=\langle L_N\rangle$.
    \end{proof}

    \sg{$\normFiveBSq$}{norm5B2}{
Let $L_E=\{x_{5b},y_{5b}\}$ and $L_N=\{g_{5b2},h_{5b2}\}$ be as in the accompanying code. The group $N_\MM(\langle L_E\rangle)$ is generated by $L_N$ and  isomorphic to the maximal subgroup $\normFiveBSq$.
    }
    
    \begin{proof}
    We have shown in the  proof of \thm{norm5B} that $x_{5b}$ and $y_{5b} \notin
        \langle x_{5b} \rangle$ are commuting elements of order $5$
    in $5^{1+6} \unlhd \nma{x_{5b}} \cong \normFiveB$. They generate
    a group $5^2 < 5^{1+6}$ containing the centre $\langle x_{5b} \rangle$.
    By \cite[\S9]{odd_local}, there are only two conjugacy classes
    of such subgroups in $\normFiveB$. One of these,  whose centralisers have
    structure $( 5 \times 5^{1+4} ){:}5^2{:}\sym{3}$, have
    as normalisers the subgroups $\normFiveBSq$ sought; the other,
    the centralisers of which have structure $5 \times 5^{1+4}{:}2^{1+4}{:}5$,
    do not. In the accompanying code we exhibit $t \in \cma{x_{5b}, y_{5b}}$ of order $3$, which proves that  $\langle x_{5b}, y_{5b} \rangle$ is of the former kind. We have confirmed $L_N\leq N_\MM(\tilde E)\cong \normFiveBSq$.  Hence we deduce $\cma{x_{5b}, y_{5b}}=( 5 \times 5^{1+4} ){:}5^2{:}\sym{3}$. All elements of order $2$ or $3$ therein project
    to elements of the same order in the factor  $\sym{3}$, and therefore generate it. We exhibit two such elements in the accompanying code.  The $5$-subgroup is constructed using  \lem{odd_ext} by exhibiting suitable elements $\ell,g_1,g_2,h_1,h_2,\sigma$. These elements are expressed as words in elements of $L_N$, which proves that  $\langle x_{5b}, y_{5b} \rangle
        \cap \cma{x_{5b}, y_{5b}}$ contains a $5$-group of order at least
    $5^{2+2+2+2} = 5^8$; this can only be the normal $5$-group  in the centraliser. 
    Finally, the conjugation action of  $\langle g_{5b2}, h_{5b2} \rangle$ on $\tilde E$ induces
    $480 = | \ling{2}{5}|$ automorphisms, which completes the proof of $N_\MM(\tilde E)=\langle L_N\rangle$.
    \end{proof}

    \sg{$\normFiveBCu$}{norm5B3}{
      Let $L_E=\{x_{5b},y_{5b},g_2\}$ and $L_N=\{g_{5b3},h_{5b3}\}$ be as in the accompanying code. The group $N_\MM(\langle L_E\rangle)$ is generated by $L_N$ and  isomorphic to the maximal subgroup $\normFiveBCu$.
    }
    
    \begin{proof}
    We verify that  $\langle x_{5b}, y_{5b}, g_2 \rangle$ is
    an elementary abelian $5$-group. By \cite[Theorem~5]{odd_local}, its normaliser  lies
    inside a maximal subgroup  $\normFiveB$, $\normFiveBSq$, $\normFiveBCu$,
    or $\normFiveBBq$. Only the third of these has order divisible by $31=|h_{5b3}^2|$, so  $\langle L_N\rangle \leq N_\MM(\tilde E)\leq \normFiveBCu$.
    We construct elements $g_1,g_2,h_1,h_1^\sigma$ and confirm that together with $x_{5b}$ and $y_{5b}$ they generate a $5$-subgroup of $\langle L_N\rangle$ of order at least $5^6$. These elements commute with $g_2$, so this $5$-subgroup lies in $C_\MM(\tilde E)$. The element $( h_{5b3} g_{5b3}^2 )^{15}$ is an involution that centralises $x_{5b}$, $y_{5b}$, and $g_2$, so it increases the size
    of the group generated by a factor of at least $2$. Lastly, we enumerate $372000=|{\rm PSL}_3(5)|$ automorphisms induced by the conjugation of $\langle g_{5b3}, h_{5b3} \rangle$ on $\tilde E$. This completes the proof that $\langle L_N\rangle=N_\MM(\tilde E)\cong \normFiveBCu$.    \end{proof}

    \sg{$\normFiveBBq$}{norm5B4}{
      Let $L_E=\{x_{5b4},y_{5b4},a,b\}$ and $L_N=\{g_{5b4},h_{5b4}\}$ be as in the accompanying code. The group $N_\MM(\langle L_E\rangle)$ is generated by $L_N$ and  isomorphic to the maximal subgroup $\normFiveBBq$.
    }
    
    \begin{proof}
    We confirm that $\tilde E$ is an abelian $5$-group of order $5^4$, so by \cite[Theorem~5]{odd_local} its normaliser is contained in a maximal subgroup of shape either
    $\normFiveB$, $\normFiveBSq$, $\normFiveBCu$, or $\normFiveBBq$. Exhibiting an element
    $[ g_{5b4}, h_{5b4} ]^6$ of order $13$
    in $\nma{L_E}$ rules out all but the last possibility, so we have confirmed that  $L_N\leq N_\MM(\tilde E)\leq \normFiveBBq$. A calculation confirms that  $x_{5b} =[ h_{5b4}^{-2}, g_{5b4}^4 ]^3$,
    $y_{5b} = x_{5b} {g_{5b4}}[ g_{5b4}^4, h_{5b4}^{-2} ]{g_{5b4}^{-1}}$,
    $a = [ h_{5b4}^{-1}, g_{5b4}^4 ]$, and
    $b = g_{5b4} h_{5b4} g_{5b4}^4 h_{5b4}^{-1} g_{5b4}^3$, so $\tilde E\leq \langle L_N\rangle$. The conjugation action of $\langle L_N\rangle$ on $\tilde E$ induces $93600=|3\times 2\cdot {\rm PSL}_2(25)|$ automorphisms, and it follows that $\langle L_N\rangle\cong \normFiveBBq$.
    \end{proof}

    \sg{$\normSevenA$}{norm7A}{
      Let $L_E=\{x_7\}$ and $L_N=\{g_{7},h_{7}\}$ be as in the accompanying code. The group $N_\MM(\langle L_E\rangle)$ is generated by $L_N$ and  isomorphic to the maximal subgroup $\normSevenA$.
    }
    
    \begin{proof}
    The argument is almost identical to that for \thm{norm5A}, using
    the facts that $x_7 \in \GG$ has order $7$; moreover $\chi_\MM ( x_7 ) = 50$,so $x_7 \in 7$A and elements
    $g_7^6, h_7^7 \in \cm{x_7} \cong 7 \times \HE$ have orders $7$ and $17$,
    while $g_7^6 \notin \langle x_7 \rangle$, $h_7^{85} = x_7$. Lastly, we note that the conjugation action of $g_7$ on $\tilde E$ induces an automorphism of order $7$, in particular
    $x_7^{g_7} = x_7^3$.
    \end{proof}

    \sgNEW{$\normSevenB$}{norm7B}{
      Let $L_E=\{x_{7b}\}$ and $L_N=\{g_{7b},h_{7b}\}$ be as in the accompanying code. The group $N_\MM(\langle L_E\rangle)$ is generated by $L_N$ and  isomorphic to the maximal subgroup $\normSevenB$.
    }{The next theorem is adapted from \cite{dlp}.}

    \begin{proof}
  We confirm that  $x_{7b} \in \GG$ has order $7$,  $\chi_\MM ( x_{7b} ) = 1$,  so $x_{7b} \in 7$B and $L_N\subset N_\MM(\tilde E)$. It follows that $N_\MM(\tilde E)\cong \normSevenB$. We first demonstrate that $\langle L_N\rangle$ contains a representative of each coset of $7^{1+4}$. For this define $a_0 = g_{7b}^6$, $a_1 = a_0^{h_{7b}}$,
    $a_3 = a_0^3 a_1^{-2} ( a_1^{h_{7b}} a_1^{h_{7b}^2} )^{-1}$,
    $y_6 = a_3^{-1} g_{7b}$, and $t = ( y_6 h_{7b} )^{14}$. We verify that $t$ has order $3$ and centralises 
    $\langle y_6, h_{7b} \rangle$. Considering conjugates in $\GG$, we verify that $|\langle y_6,h_{7b}\rangle|=10080$ and that $t \notin \langle y_6, h_{7b} \rangle$, so $\langle t, y_6, h_{7b} \rangle \le
        \langle L_N \rangle$ has $3 \cdot 10080 = | 3 \times 2.\sym{7} |$ elements,
        which is precisely the number of cosets of $7^{1+4}$ in $\normSevenB$. We show that each coset has a representative. Suppose, for a contradiction, that this is not true. In this case the intersection of
    $\langle t, y_6, h_{7b} \rangle$ with $7^{1+4}$ is a non-trivial $7$-group.
    The image of $\langle t, y_6, h_{7b} \rangle$ under
    the canonical homomorphism $\phi \colon\normSevenB \to [\normSevenB]/7^{1+4}$ then
    has order dividing $3 \cdot 10080/7 = 3 \cdot 1440$, so that the element
    $r_7 = ( y_6 h_{7b} )^6$ of order $7$
    must be mapped to the identity. On the other hand, the fact that
    $r_7 h_{7b}^{-1}$ and $h_{7b}$ have orders $20$ and $6$ implies their images under $\phi$ are
    of those orders too; but this a contradiction to $|\phi(r_7h_{7b}^{-1})|=|\phi(h_{7b}^{-1})|=|\phi(h_{7b})|=6$. Thus, it remains to prove that  $7^{1+4} \lhd \normSevenB$ is a subgroup of
    $\langle L_N  \rangle$. Note firstly that $g_{7b}^6$ lies
    in this normal subgroup, for the only alternative is that $g_{7b}$
    (of order $42$) belongs to a coset of order $42$ in the quotient group
    $[\normSevenB]/7^{1+4} \cong 3 \times 2.\sym{7}$. The absence of elements
    with this order in $2.\sym{7}$ means the $14$th power of such
    a coset would lie in the factor $3$, for which $\{ e, t, t^2 \}$ are coset representatives by the previous paragraph. But
    the orders $42$, $21$, and $21$ of $g_{7b}$, $g_{7b} t^{-1}$ and
    $g_{7b} t^{-2}$ do not divide $| 7^{1+4} |$. We therefore
    find that the elements $a_0, a_1, a_3$ defined above --- which are products of conjugates
    of $g_{7b}^6$ in $\langle L_N \rangle$ --- belong
    to $7^{1+4}$, as does $a_2 = a_1 a_1^{h_{7b}}$. Noting that
    $x_{7b} = a_0^{-1} a_1^{-1} a_0 a_1$ and $a_3$ satisfy the hypotheses of
    \lem{odd_ext} with $\ell = a_0$, suitable $g_1, h_1 \in \langle a_0,\ldots,a_3 \rangle$ and
    $\sigma \in \nma{x_{7b}, a_3}$, we deduce that they generate a subgroup of size $7^5$. It follows that $\langle L_N\rangle=N_\MM(\tilde E)$.
    \end{proof}

        \sg{$\normSevenASq$}{norm7A2}{
          Let $L_E=\{x_{7},y_7\}$ and $L_N=\{g_{7a2},h_{7a2}\}$ be as in the accompanying code. The group $N_\MM(\langle L_E\rangle)$ is generated by $L_N$ and  isomorphic to the maximal subgroup $\normSevenASq$.
    }
    
    \begin{proof}
        The proof is similar to that of  \thm{norm5A}. The element
        $x_7 \in \textup{7A}$ commutes with $y_7 \notin \langle x_7 \rangle$
        and is conjugate to $y_7$ and $x_7 y_7^i$ for $1 \le i \le 6$.
Thus $\tilde E\cong 7^2$ is 7A-pure and we confirm $L_N\subseteq N_\MM(\tilde E)\cong \normSevenASq$. The factor $7^2$ of $\cma{x_7, y_7} \cong 7^2 \times \psl{2}{7}$ can be
        handled in the same way as the $5^2$ in \thm{norm5A2} since
        $y_7 = ( g_{7a2}^3 h_{7a2}^4 g_{7a2} )^{18}$ and
        $x_7 \in y_7^{\langle g_{7a2}, h_{7a2} \rangle}$. We check  that $s = ( h_{7a2}^4 g_{7a2}^4 )^7$ and 
        $t = ( g_{7a2}^3 h_{7a2} g_{7a2} h_{7a2}^3 g_{7a2} h_{7a2} )^{14}$
        have orders $3$ and $2$, so they lie in the factor $\psl{2}{7}$. Moreover, they generate this group since  $st$ and $t$ 
       satisfy Sunday's \cite{sl_pres_1} presentation for $\psl{2}{7}$. The remaining factor of
        $| 3 \times 2.\alt{4} | | 2 | =144$ in the order of the normaliser is then accounted for by enumerating the automorphisms induced by the conjugation action of $\langle L_N\rangle$ on~$\tilde E$.
    \end{proof}

\sg{$\normSevenBSq$}{norm7B2}{
  Let $L_E=\{x_{7b},a_3\}$ and $L_N=\{g_{7b2},h_{7b2}\}$ be as in the accompanying code. The group $N_\MM(\langle L_E\rangle)$ is generated by $L_N$ and  isomorphic to the maximal subgroup $\normSevenBSq$.
    }
    
    \begin{proof}
     We have seen in the proof of
    \thm{norm7B} that  $x_{7b}$ and $a_3 \notin \langle x_{7b} \rangle$ are
    commuting elements of order $7$ such that $a_3 \in 7^{1+4} \unlhd \nma{x_{7b}}
        \cong \normSevenB$. It follows that $\langle x_{7b}, a_3 \rangle \cong 7^2$
    is a subgroup of $7^{1+4}$ containing the centre $x_{7b} \in \textrm{7B}$, so \cite[\S10]{odd_local} shows that  $\nma{L_E}$
    lies in a maximal subgroup $\normSevenB$ or $\normSevenBSq$. We show that $|g_{7b2} h_{7b2}^2|=48$, which implies that $\langle N_N\rangle\leq \normSevenBSq$. We exhibit elements $x_{7b}, a_3, g_1, h_1, h_1^{\sigma}$ such that \lem{odd_ext} implies that these elements generate a $7$-group of order at least $7^5$, centralising $\tilde E$. We demonstrate the presence of such a $7$-group
    in $\langle L_N \rangle$ by expressing each of
    these five elements as words in elements of $L_N$. Noting that $(h_{7b2} g_{7b2}^3 h_{7b2} g_{7b2}^{-1} h_{7b2}^{-3} g_{7b2}^{-1})^7$ is an element of order $3$ centralising the same $7^2$, \lem{ext_lemma} shows that the $7$-group extends
    to a subgroup of $\langle L_N \rangle \cap C_\MM(\tilde E)$
    with order at least $7^5 \cdot 3$. Finally, enumerating the conjugation action, we obtain
    \[ | \langle L_N \rangle | \ge 672 |
        \langle L_N \rangle \cap C_\MM(\tilde E)
    | \ge 672 \cdot 7^5 \cdot 3 = | \normSevenBSq|. \]
    Since $\langle L_N \rangle \le \nma{L_E} \le \normSevenBSq$, equality follows.
    \end{proof}

    \sgNEW{$\normSevenBSqOut$}{norm7B2'}{
      Let $L_E=\{x_{7b},y_{7b}\}$ and $L_N=\{x_{7b},y_{7b},x_4,x_{14}\}$ be as in the accompanying code. The group $N_\MM(\langle L_E\rangle)$ is generated by $L_N$ and  isomorphic to the maximal subgroup $\normSevenBSqOut$.
    }{The next theorem is adapted from \cite{four_fus}.}

    \begin{proof}
    Arguing as in the proof of  \thm{norm7A2} establishes that
    $\tilde E\cong 7^2$ is 7B-pure; conjugating elements are provided in the code. This allows us to confirm that  $L_N\leq N_\MM(\tilde E)\leq \normSevenBSqOut$. We verify that $x_4$ and $x_{14}$ satisfy a presentation for the group  $\lins{2}{7}$, which implies the claim. 
    \end{proof}

        \sgNEW{$\normElevenSq$}{norm11_2}{
          Let $L_E=\{x_{11},y_{11}\}$ and $L_N=\{x_{11}, y_{11}, x_3, x_4, x_5 \}$ be as in the accompanying code. The group $N_\MM(\langle L_E\rangle)$ is generated by $L_N$ and  isomorphic to the maximal subgroup $\normElevenSq$.
    }{The next theorem is adapted from  \cite{four_fus}.}
    
    \begin{proof}
        We confirm that $\tilde E\cong 11^2$ and  $L_N\subset N_\MM(\tilde E)\cong \normElevenSq$, see \prp{bigoddlocalprop}; note that all elements of order $11$ in $\MM$ are conjugate. We now check that $x_3$, $x_4$ and $x_5$
        have the orders indicated by their subscripts, and that $x_5$ commutes
        with $x_3, x_4$. Furthermore, the cosets of $x_4$ and $x_3$
        modulo $\langle x_4^2 \rangle$ satisfy the presentation
        $\langle a, b \mid a^2, b^3, ( ab )^5 \rangle$
        for the simple group $\alt{5}$, which by Von Dyck's Theorem~\cite[Theorem 2.53]{hcgt} ensures
        $\langle x_3, x_4 \rangle / \langle x_4^2 \rangle \cong \alt{5}$.
        Observing that $x_4^2$ commutes with $x_3$ and (of course) $x_4$,
        it must be that $\langle x_3, x_4, x_5 \rangle\cong 5 \times 2.\alt{5}$, so $\langle L_N \rangle =   N_\MM(\tilde E)$.
    \end{proof}

    \sgNEW{$\normThirteenA$}{norm13A}{
Let $L_E=\{g_{13}\}$ and $L_N=\{g_{13}, y_{12}, c, d \}$ be as in the accompanying code. The group $N_\MM(\langle L_E\rangle)$ is generated by $L_N$ and  isomorphic to the maximal subgroup $\normThirteenA$.
    }{The next theorem is adapted from \cite{dlp}.}

    \begin{proof}
    We verify that $L_N\subset N_\MM(\tilde E)$. Enumerating a conjugate of $\langle c,d\rangle$ in $\GG$ proves that $|\langle c,d\rangle|=5616$, so $g_{13}$ cannot be a 13B element: the size of the normaliser of the latter is not divisible by $5616$. Thus, $g_{13}$ is a 13A element, hence $N_\MM(\tilde E)\cong \normThirteenA$. We confirm that $\cm{g_{13}} \cong 13 \times \psl{3}{3}$ is contained
    in $\langle g_{13}, y_{12}, c, d \rangle$ by verifying that  $c, d$ centralise the non-commuting elements $g_{13}$ and $y_{12}$.
    Thus, $g_{13}$ is an element of order $13$ not  belonging
    to $\langle c, d \rangle < \cm{y_{12}}$, and  \lem{ext_lemma} implies
    that $| \langle g_{13}, c, d \rangle |
        \ge 13 | \langle c, d \rangle| = 13 \cdot 5616 =
        | 13 \times \psl{3}{3} |$. The observation that $| \langle g_{13}, c, d \rangle |\leq \cm{g_{13}}$ yields the desired result. The conjugation action of $\langle y_{12}\rangle$ on $\tilde E$ gives rise to $12$ automorphisms; now $\langle L_N\rangle=N_\MM(\tilde E)$ follows.
    \end{proof}

\sg{$\normThirteenB$}{norm13B}{
  Let $L_E=\{c\}$ and $L_N=\{g_{13}, c, c_2, x_1, x_2  \}$ be as in the accompanying code. The group $N_\MM(\langle L_E\rangle)$ is generated by $L_N$ and  isomorphic to the maximal subgroup $\normThirteenB$.
    }
    
\begin{proof}
  The element $c$ has order $13$ and a conjugate in $\GG$. The Atlas implies that $\GG$ has no 13A elements, so $c$ lies in class 13B and we confirmed that $L_N\subset N_\MM(\tilde E)\cong \normThirteenB$. Enumerating a conjugate of  $\langle x_1, x_2 \rangle$ in $\GG$ shows that $|\langle x_1,x_2\rangle|= 288 =| 3 \times 4.\alt{4}|$, so this group must be  $3 \times 4.\alt{4}$. The elements $g_{13}, c, c_2$
    in turn have orders coprime to $288$ and thus lie in the $13$-group
    $13^{1+2}$. An enumeration proves that $g_{13}, c, c_2$
    generate $13^{1+2}$. 
    \end{proof}

\sg{$\normThirteenBSq$}{norm13B2}{
 Let $L_E=\{c,c_2\}$ and $L_N=\{c, c_2, x_1, x_2, x_4 \}$ be as in the accompanying code. The group $N_\MM(\langle L_E\rangle)$ is generated by $L_N$ and  isomorphic to the maximal subgroup $\normThirteenBSq$.
    }
    
    \begin{proof}
    The proof of \thm{norm13B} shows that $\langle c, c_2 \rangle \cong 13^2$; moreover, as
    $c \in \textup{13B}$ is conjugate to $c_2$ and $cc_2^i$ for 
    $1 \le i \le 13$ with conjugating elements $x_2^3$ and $x_2^3 x_1^{12i + 13} x_2^3$,
    respectively, this group is 13B-pure. We confirm that $L_N\subset N_\MM(\tilde E)\cong \normThirteenBSq$. We verify that the conjugation action of $\langle x_1, x_2, x_4 \rangle$ on $\tilde E$ induces $4 | \lins{2}{13} | =8736$ automorphisms; this implies that $\langle L_N\rangle=N_\MM(\tilde E)$.
    \end{proof}

    \sg{$41{:}40$}{norm41}{
      Let $L_E=\{g_{41}\}$ and $L_N=\{g_{41},h_{41}\}$ be as in the accompanying code. The group $N_\MM(\langle L_E\rangle)$ is generated by $L_N$ and  isomorphic to the maximal subgroup $41{:}40$.
    }
    
    \begin{proof}
   We show that  $g_{41}$ has order 41 and
    $g_{41}^{h_{41}} = g_{41}^{22}$ and $22$ is a primitive root modulo $41$.
    \end{proof}

   \section{\label{2local}The Maximal 2-Local Subgroups of $\MM$}
\noindent The maximal $2$-local subgroups of $\MM$,
all known to be maximal before the publication of the Atlas \cite{atlas}, were first classified by Meierfrankenfeld and Shpectorov, see \cite{2local2,2local1}. We introduce the necessary
terminology before re-stating their results,  which we need to do in order to justify our constructions. Recall that $z$ denotes the central 2B involution in $\GG=C_\MM(z)$, and that $\GG\cong \QQ\udot \CO$ where $\QQ=\QStr$. If $g_2\in\MM$ is a 2B involution and $u,v\in\MM$ satisfy $g_2^u=z=g_2^{v}$, then $v^{-1}u\in C_\MM(z)$ and therefore $\QQ^{v^{-1}u}=\QQ$. Thus, the group  $\QQ^{v^{-1}}=\QQ^{u^{-1}}$ is independent of the choice of $u$ or $v$, and we can define $\QQ_{g_2}$ as $\QQ^{u^{-1}}$ More generally, for a subgroup $U\leq \MM$ we write \[\QQ_U= \bigcap_{x \in U \cap \textrm{2B}} \QQ_x.\]
   A 2B involution $g_2$ is  \textsl{perpendicular} to $g_1$ if $g_1 \in \QQ_{g_2}$.
    A \textsl{singular} subgroup of $\MM$ is an elementary abelian 2-group
    in which all involutions are of class 2B and pairwise perpendicular. There are two conjugacy classes of singular subgroups
    of order $2^5$ in $\MM$, denoted as ``type 1'' and ``type 2''
    in accordance with \cite{2local1}, see also Section \ref{sing_sub}. An \textsl{ark} $A$ is a group
    generated by a type-1 2B$^5$ singular subgroup $U$ and
    all the type-2 2B$^5$ subgroups that intersect $U$
    in an index 2 subgroup; it turns out that the size of an ark is always $| A | = 2^{10}$.
    With these definitions, Meierfrankenfeld and Shpectorov prove the following result.
        
    \begin{proposition}[\!\!{\cite[Theorem~1]{2local1}} and {\cite[Theorem~A]{2local2}}]
        \label{2localprop}
        The Monster contains exactly 7 conjugacy classes
        of maximal 2-local subgroups. They are:
        \begin{ithm}
            \item[\rm (1)] the normalisers of subgroups of types {\rm 2A} and
                {\rm 2A$^2$}, with structures $\normTwoA$ (where $\BB$
                is the Baby Monster) and $\normTwoASq$ respectively;
            \item[\rm (2)] the normalisers of singular subgroups of types
                {\rm 2B}, {\rm 2B$^2$}, {\rm 2B$^3$}, and {\rm 2B$^5$} type-2, which
                have structures $\normTwoB$,
                $\normTwoBSq$, $\normTwoBCu$, and $\normTwoBQu$
                respectively; and
            \item[\rm (3)] the normalisers of arks, with structure $\normTwoArk$.
        \end{ithm}
    \end{proposition}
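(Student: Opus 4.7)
The plan is to follow the Meierfrankenfeld--Shpectorov strategy, which splits according to whether the 2-subgroup being normalised contains a 2A involution. An independent computational verification of the existence and structure for each of the seven classes is provided in Section~\ref{2local} via explicit \mmgroup{} generators, but the assertion ``exactly 7 classes'' is genuinely structural and will have to be imported from the cited work.

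For the 2A cases I would start from Griess's identification $\cm{t}\cong 2\udot\BB$ for $t\in 2\mt{A}$, which immediately yields the class $\normTwoA$ and its maximality once one checks that $\BB$ admits no suitable outer automorphism. For the 2A$^2$ case the plan is to analyse commuting pairs of 2A involutions via Norton's 6-transposition property: the product $tt'$ falls into one of the classes $1\mt{A}$, $2\mt{A}$, $2\mt{B}$, $3\mt{A}$, $3\mt{C}$, $4\mt{A}$, $4\mt{B}$, $5\mt{A}$, $6\mt{A}$, so that a commuting 2A-pure pair forces $tt'\in 2\mt{A}$, from which the single class of 2A$^2$ subgroups with normaliser $\normTwoASq$ can be extracted inside $2\udot\BB$.

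For the 2B-local subgroups I would work inside $\GG = \QQ\udot\CO$ and use the identification of $\QQ/\langle z\rangle$ with the Leech lattice modulo~2, which carries a $\CO$-invariant quadratic form $q$. Singular subgroups then correspond to totally singular subspaces under $q$, and their $\CO$-orbits can be enumerated directly: ranks 1, 2, and 3 each give a single orbit, rank 5 gives two orbits (types 1 and 2), rank 4 and rank-5 type-1 yield non-maximal normalisers, and this produces the four singular normalisers listed. Arks require a separate combinatorial treatment: one defines $A$ as the group generated by a type-1 $2\mt{B}^5$ together with all type-2 $2\mt{B}^5$ meeting it in index~2, establishes $|A|=2^{10}$, and reads off $\nm{A}\cong\normTwoArk$ from the induced action on $A$.

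The main obstacle would be the maximality assertions: ruling out, for each candidate, a strictly larger proper 2-local overgroup. This requires detailed information on the maximal subgroups of $\CO$ and careful control of the fusion of 2-local subgroups into $\GG$ and back, in particular to exclude ``intermediate'' normalisers between the singular-rank cases. The ark case is the subtlest, because arks are not themselves elementary abelian, so their identification and the shape of their normaliser cannot be read off directly from the $\CO$-orbits on singular subspaces and instead rely on the indirect bridge between the type-1 and type-2 rank-5 singular classes described above.
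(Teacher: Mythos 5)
The paper offers no proof of this proposition at all: it is quoted verbatim from Meierfrankenfeld and Shpectorov (\cite[Theorem~1]{2local1} and \cite[Theorem~A]{2local2}), which is exactly where your argument also terminates, since you correctly concede that the count of seven classes must be imported from the cited work. Your sketch of their strategy --- splitting on the presence of a 2A involution, treating singular subgroups via the quadratic form on the Leech lattice modulo 2 inside $\GG=\QQ\udot\CO$, and handling arks separately --- is a reasonable gloss on those papers and consistent with the auxiliary facts the present paper does quote (Propositions~\ref{sing_class} and~\ref{the2_5s}), but it plays no logical role here: the paper, like you, simply cites the result.
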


\begin{remark}
Generators for the centralisers of {\rm 2A} and {\rm 2B} involutions described in Proposition \ref{2localprop} already appear with proof in \cite[\S 4]{dlp} and \cite[\S 2.5]{dlp} respectively.  For completeness, these generators are included in the accompanying code \cite{pisgit} along with the relevant arguments, but we do not reproduce them here.
\end{remark}

    \subsection{Preliminary results}
    \subsubsection{\label{sing_sub}Checking singularity of 2B-pure subgroups}
    
    The construction of several subgroups in \prp{2localprop}
    demands some means of identifying singular subgroups and classifying
    them up to conjugacy; otherwise, we have no means of ascertaining that what
    we produce are in fact maximal subgroups. Fortunately, Meierfrankenfeld
    and Shpectorov provide some simple tests for this.
  
    \begin{proposition}
      \label{sing_class}  \label{sing_perp}The following hold.
     \begin{ithm} 
\item        There are exactly $6$ conjugacy classes of
        non-trivial singular subgroups in $\MM$; the corresponding orders
        are $2$, $2^2$, $2^3$, $2^4$, $2^5$, and $2^5$.
      \item    The perpendicularity relation is symmetric.
    \item   A subgroup $U$ is singular if and only if it is generated
        by a set $\mathcal{U}$ of pairwise perpendicular {\rm 2B} involutions.
        Furthermore, the group $\QQ_U = \bigcap_{x \in U \cap \textrm{\rm 2B}} \QQ_x$ coincides with $\bigcap_{x \in \mathcal{U}} \QQ_x$.
     \end{ithm}
    \end{proposition}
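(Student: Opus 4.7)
The plan is to treat the three parts in order, relying on Meierfrankenfeld--Shpectorov's classification together with the explicit structure of $\GG=\QQ\udot\CO$, where $\QQ=\QStr$. Part (a) is essentially a direct consequence of the classification of singular subgroups in \cite{2local1}: the six conjugacy classes of non-trivial singular subgroups, with the two of order $2^5$ distinguished as ``type 1'' and ``type 2'', are established there, and I would simply cite the relevant statement.

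For part (b), suppose $g_1,g_2\in\mathrm{2B}$ with $g_1\in\QQ_{g_2}$, and pick $u\in\MM$ with $g_2^u=z$, so that $g_1^u\in\QQ$. Since $z$ is central in $\QQ$, the involution $g_1^u$ commutes with $z$, which immediately yields that $g_1$ and $g_2$ commute in $\MM$. If $g_1=g_2$ then symmetry is trivial. Otherwise $g_1^u\in\QQ\setminus\{z\}$ is a 2B involution of the Monster, and using the transitivity of $\CO$ on the 2B cosets of $\QQ/\langle z\rangle$ (an intrinsic fact about this module for $\CO$, identified with the Leech lattice modulo $2$), we obtain $w\in\GG$ with $(g_1^u)^w=z$. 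Because $w\in\GG$ centralises $z$, setting $t:=uw$ gives $g_1^t=z$ and $g_2^t=z^w=z\in\QQ$, hence $g_2\in\QQ_{g_1}$ by definition.

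For part (c), the forward direction is immediate from the definition of a singular subgroup by taking $\mathcal{U}:=U\cap\mathrm{2B}$. For the converse, the commutativity argument of part (b) applied pairwise to elements of $\mathcal{U}$ shows that $U=\langle\mathcal{U}\rangle$ is elementary abelian, and after conjugating a generator to $z$ we may assume $U\le\QQ$. The main obstacle is then to show that every non-identity element of $U$ is again a 2B involution in $\MM$ and that perpendicularity extends to all pairs of non-identity elements of $U$. I would reduce this to the statement that a totally isotropic subspace of $\QQ/\langle z\rangle$ (for the commutator form) generated by 2B cosets consists entirely of 2B cosets together with the trivial coset, which can be verified from the explicit description of 2B involutions in $\QQ$ as Leech lattice cosets of a specific type. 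The remaining equality $\QQ_U=\bigcap_{x\in\mathcal{U}}\QQ_x$ then follows: the inclusion $\subseteq$ is immediate from $\mathcal{U}\subseteq U\cap\mathrm{2B}$, while $\supseteq$ is a consequence of the preceding reduction, since any 2B involution in $U$ is a product of a subset of $\mathcal{U}$ and the assignment $x\mapsto\QQ_x$ is compatible with such products inside the singular subgroup.
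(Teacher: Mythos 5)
The paper does not prove this proposition from first principles: it simply cites \cite{2local1}, attributing part a) to Proposition~4.15, part b) to Lemma~4.1, and part c) to Lemma~4.3 of that reference. Your treatment of part a) therefore matches the paper. However, your attempted direct proofs of parts b) and c) contain genuine errors, the clearest of which is in part b). Having arranged $g_2^u=z$ and $g_1^u\in\QQ$, you invoke ``transitivity of $\CO$'' to produce $w\in\GG$ with $(g_1^u)^w=z$. No such $w$ can exist: every element of $\GG=C_\MM(z)$ centralises $z$ and normalises $\QQ$, so if $(g_1^u)^w=z$ then $g_1^u=z^{w^{-1}}=z$, forcing $g_1=g_2$. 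Transitivity of $\CO$ on cosets of a given type in $\QQ/\langle z\rangle$ only lets you move $g_1^u$ to a fixed non-central representative, never to $z$ itself. If instead you take $w\in\MM$ with $(g_1^u)^w=z$ (which exists by conjugacy of 2B involutions), then the conclusion you need is precisely that $g_2^{uw}=z^w$ lies in $\QQ$, and that is the nontrivial content of the symmetry statement --- your argument assumes what is to be proved.

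Part c) has a similar character: the assertion that a totally isotropic subspace of $\QQ/\langle z\rangle$ spanned by 2B cosets consists entirely of 2B cosets, and the claim that $x\mapsto\QQ_x$ is ``compatible with products'' (i.e.\ that $\QQ_x\cap\QQ_y\subseteq\QQ_{xy}$ for perpendicular $x,y$), are both left as unverified reductions, and they are exactly the substance of \cite[Lemma~4.3]{2local1}. In the context of this paper the intended proof is a citation; if you want a self-contained argument you would need to carry out the computations in the extraspecial group $\QStr$ and its $\CO$-module $\QQ/\langle z\rangle$ that Meierfrankenfeld and Shpectorov perform, rather than gesture at them.
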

    \begin{proof}
      Part a) is \cite[Proposition~4.15]{2local1}, part b) is \cite[Lemma~4.1]{2local1}, and part c) is \cite[Lemma~4.3]{2local1}.
    \end{proof}
    
    Part a) of Proposition \ref{sing_class} shows that, apart from the $2^5$ case (see  \S\ref{2_5_arks}),
    the conjugacy class of a singular subgroup is determined by the group size. Singularity can be tested
    by straightforward enumeration of the group elements and ascertaining
    the non-identity elements are commuting perpendicular 2B involutions; this is a feasible task since \mmgroup\ provides functionality to conjugate a 2B involution to $z\in \GG$ and to test whether an element lies in $\QQ$. Part c) provides some simplifications for this test. This approach  suffices for the construction of
    the maximal 2A$^2$, 2B$^2$, and 2B$^3$ normalisers.

    \subsubsection{\label{2_5_arks}The 2B$^5$ and Ark Normalisers}
    
    To construct the 2B$^5$ and ark normalisers, the question
    of distinguishing the two conjugacy classes of singular subgroup
    with structure $2^5$ must be addressed. The following result forms
    the basis of the method that we employ. 
    
    \begin{proposition}[\!\!{\cite[Lemma~4.14]{2local1}}]
        \label{the2_5s}
        The following hold.
        \begin{ithm}
            \item If $U\leq \MM$ is singular $2^5$ type-1, then
                $\QQ_U /U$ is of order 2.  Furthermore,
                all involutions in $\QQ_U \setminus U$ are
                in class {\rm 2A}.
            \item If $U\leq \MM$ is singular $2^5$ type-2, then $\QQ_U = U$.
        \end{ithm}
    \end{proposition}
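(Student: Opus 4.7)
The plan is to verify this proposition by direct computation in \mmgroup, leveraging \prp{sing_class}(a), which guarantees that there are exactly two conjugacy classes of singular $2^5$ subgroups. It therefore suffices to exhibit one representative $U$ of each class and to compute $\QQ_U$ explicitly. For $U = \langle x_1, \ldots, x_5 \rangle$, I would use the built-in \mmgroup\ conjugation routines to find, for each generator $x_i$, an element $u_i \in \MM$ with $x_i^{u_i} = z$, giving $\QQ_{x_i} = \QQ^{u_i^{-1}}$, and then form the intersection. Part c) of \prp{sing_class} ensures that we may restrict the intersection to the five generators rather than all $31$ non-identity involutions of $U$.

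With $\QQ_U$ computed for each representative, the two cases are distinguished by the order $|\QQ_U|$. Note that $U \le \QQ_U$ holds automatically, since pairwise perpendicularity implies each $x_j$ lies in every $\QQ_{x_i}$. The representative with $\QQ_U = U$ is identified as type-2, and the one with $|\QQ_U / U| = 2$ as type-1. To verify the additional claim in part a) that every involution of $\QQ_U \setminus U$ is in class 2A, I would enumerate the $32$ elements of the non-trivial coset $yU$ for any fixed $y \in \QQ_U \setminus U$, and for each such element conjugate into $\GG$ and read off $\chi_\MM$, which separates class 2A from class 2B among involutions of $\MM$.

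The main practical obstacle is producing the two representative singular subgroups in the first place, and doing so in a way that unambiguously identifies which type each one is --- we cannot invoke the proposition we are trying to prove. A natural source is the 2-local machinery appearing later in the paper: by the very definition recalled in \S\ref{2_5_arks}, an ark $A$ of order $2^{10}$ is generated by a type-1 singular $2^5$ together with all type-2 singular $2^5$'s that meet it in an index-2 subgroup, so from an explicit ark inside the maximal subgroup $\normTwoArk$ we can read off specific representatives of both types. The secondary practical obstacle is to make the intersection of up to five conjugates of $\QQ \cong 2^{1+24}$ computationally tractable; I would proceed iteratively, representing partial intersections by their action on $\QQ / \langle z \rangle$ viewed as the Leech lattice modulo $2$.

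Once the calculation is complete for the chosen representatives, the conclusion extends to all singular $2^5$ subgroups by conjugation-equivariance: if $g \in \MM$ and $V = U^g$, then $V$ is singular of the same type as $U$, and $\QQ_V = (\QQ_U)^g$, so the structural assertions about $|\QQ_V/V|$ and the class of the extra involutions are preserved.
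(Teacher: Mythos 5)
The first thing to note is that the paper offers no proof of this statement at all: Proposition~\ref{the2_5s} is quoted verbatim from \cite[Lemma~4.14]{2local1}, and the paper's only engagement with it is to \emph{use} it (in the proofs of Theorems~\ref{thm_norm2B5} and~\ref{thm_normArk}) as the computational criterion deciding which of the two conjugacy classes a given singular $2^5$ subgroup belongs to. Your proposal is therefore a genuinely different route --- an independent computational verification rather than a citation --- and its overall skeleton is sound: $U\mapsto\QQ_U$ is conjugation-equivariant, \prp{sing_class}(a) says there are exactly two classes of singular $2^5$ subgroups, so it suffices to compute $\QQ_U$ for one representative of each class, and \prp{sing_perp}(c) legitimately reduces the intersection to the five chosen generators. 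Checking that the extra involutions in the type-1 case lie in 2A by conjugating into $\GG$ and evaluating $\chi_\MM$ is exactly the technique the paper uses elsewhere.

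The weak point is how you certify that your two representatives exhaust both classes and carry the right labels. Reading representatives ``of both types'' off an ark is circular: an ark is \emph{defined} (in the paragraph preceding \prp{2localprop}) as the group generated by a type-1 singular $2^5$ together with the type-2 singular $2^5$'s meeting it in an index-2 subgroup, so that definition cannot be used to identify the types before the present proposition is available. The non-circular repair is cheap, though: if you exhibit two singular $2^5$ subgroups $U_1,U_2$ with $|\QQ_{U_1}|=2^6$ and $|\QQ_{U_2}|=2^5$, then conjugation-equivariance already forces them to be non-conjugate, hence, by \prp{sing_class}(a), they represent the two classes --- no appeal to arks is needed. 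What a purely internal computation cannot settle is that the class with $|\QQ_U/U|=2$ is the one that \cite{2local1} calls ``type 1''; that is a matter of matching conventions with \cite[Lemma~4.14]{2local1}, and is harmless here only because the labels enter the rest of the paper exclusively through this very criterion. A smaller practical caveat: the centre of $\QQ_{x_i}$ is $\langle x_i\rangle$, not $\langle z\rangle$, so the iterative intersection is best set up by taking the first generator to be $z$ itself (so that all subsequent partial intersections live inside $\QQ$) rather than by passing each conjugate separately to $\QQ/\langle z\rangle$.
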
 
    
    The task of determining $\QQ_U$ can be simplified by the observation
    that, if $U$ is generated by a subgroup $V< U$ and an involution $t$ perpendicular
    to the generators of $V$, then \prp{sing_perp} may be applied
    to yield $\QQ_U = \QQ_V \cap \QQ_t$.


    \sg{$\normTwoASq$}{norm2A2}{ Let $L_E=\{y,y^{g_2} \}$ and $L_N=\{g_2,h_2\}$ be as in the accompanying code. The group $N_\MM(\langle L_E\rangle)$ is generated by $L_N$ and isomorphic to the maximal subgroup $\normTwoASq$.

    }
    
    \begin{proof}
We confirm that $\tilde E$ is $2\mt{A}$-pure of size $2^2$, and so $\tilde N\cong \normTwoASq$. Moreover, we confirm that $L_N \leq\tilde N =   N_\MM(\langle L_E\rangle)$.
Define
    $a = h_2^2 g_2^9$ and $b = h_2^2 g_2^{12}$.  We compute that $\langle a, b \rangle \le C_\MM (
      \tilde E) \cong 2^2\udot {}^2 \lie{E}{6}{2}$. Taking the image under the quotient by $\tilde E$, we have $\langle \tilde E a, \tilde E b \rangle \le
        {}^2 \lie{E}{6}{2}$. Now $a^2, b^2 \notin \tilde E$ satisfy $( a^2 )^{13} = 1$
    and $( b^2 )^{19} = 1$, so that $\tilde Ea^2$ and
    $\tilde E b^2$ must have orders 13 and 19, respectively.
    Since the sole maximal subgroup of ${}^2 E_6 ( 2 )$
    with order divisible by 19 has the form $U_3 ( 8 ) : 3$
    (see \cite{2e62}), and the order of this group
    is not divisible by 13, the elements $\tilde E a, \tilde E b$ must
    in fact generate $C_{\MM} (\tilde E ) / \tilde E$. 
    Now $\tilde E \leq \langle L_N \rangle$ since $y^{g_2} = ( h_2 g_2 )^{14}$,  and hence $C_{\MM} (\tilde E) \leq \langle L_N \rangle$.  Finally,  we note that the conjugation action of $\langle L_N\rangle$ induces 6 = $|S_3|$ distinct automorphisms of $\tilde E$, implying that $N_{\MM}(\tilde E) = \langle L_N\rangle$.
    \end{proof}
    
    
    \sg{$\normTwoBSq$}{norm2B2}{
Let $L_E=\{z,z_1 \}$ and $L_N=\{g_{2b}, h_{2b}\}$ be as in the accompanying code. The group $N_\MM(\langle L_E\rangle)$ is generated by $L_N$ and isomorphic to $\normTwoBSq$, a maximal subgroup of $\MM$.
    }
    
    \begin{proof}
    We verify that $\tilde E$ is $2\mt{B}$-pure of size $2^2$, and so $\tilde N\cong \normTwoBSq$. 
    Moreover, $\tilde E \leq \QQ \leq \GG$ and $L_N \leq\tilde N =   N_\MM(\langle L_E\rangle)$.
   We now deviate from Procedure \ref{provesg}. To show that $\langle g_{2b}, h_{2b} \rangle$ is the complete normaliser of $\tilde E$, it suffices to prove that $| \langle g_{2b}, h_{2b} \rangle |
    \ge | N_\MM ( \tilde E) |$.  
    Define four subsets $\mathcal{F}_1=L_N, \mathcal{F}_2, \mathcal{F}_3,
    \mathcal{F}_4$ of $\langle L_N \rangle$, and let  $K_i = \langle \mathcal{F}_i, \dots, \mathcal{F}_4 \rangle$ for $i\in\{1,\ldots,4\}$.
  We will describe homomorphisms $\varphi_1, \varphi_2, \varphi_3$ such that $K_{i+1} \le \ker{\varphi_i}$ and
    $\ker{\varphi_3} < \ker{\varphi_2} < \ker{\varphi_1}$
     for $i = 1, 2, 3$. We can then find a lower bound for $|\langle g_{2b}, h_{2b} \rangle|$ using a repeated application of the Isomorphism Theorem.
    
  First, let $\varphi_1\colon  K_1 \to \mathrm{Aut}(\tilde E)$ be the homomorphism induced by conjugation.
    By checking the generators of each  $K_i$, we verify that $|\varphi_1(K_1)|=6=|\sym{3}|$,  while $K_2 \leq C_{\MM}(\tilde E)\leq \GG = C_\MM(z)$; this also shows that $K_2 \leq  \ker{\varphi_1}$.

   Now consider the action of $K_2$ on  $\QQ_{\tilde E}/{\tilde E}$, as defined in Lemma \ref{sing_class}. By \cite[Lemma~4.5]{2local1}, the image of this action is  isomorphic
    to some subgroup of $\MT{24}$. Let
    $\varphi_2\colon K_2 \to \mathrm{Aut}({\QQ_{\tilde E}/{\tilde E}})$ be the corresponding action homomorphism. In order to compute this
    homomorphism, we give (and verify) generators for $\QQ_{\tilde E}$ in the accompanying code.
    It is then easily shown that $K_3 \le
    \ker{\varphi_2}$.  Moreover,
    we claim that $\varphi_2 ( K_2 ) \cong \MT{24}$. We construct elements in $\varphi_2 ( K_2 )$ with orders 5, 23, and 21. No proper subgroup of $\MT{24}$ has elements of all three of these orders, so the claim holds.

Now observe that $\mathcal{F}_3, \mathcal{F}_4 \subseteq
        \GG \cong \QQ \udot \CO$ and consider
    $\varphi_3\colon K_3 \to \CO$, the restricted canonical homomorphism from $\GG$ into $\CO$.  We find that $| \varphi_3 (
        \langle \mathcal{F}_3 \rangle
    ) | = 2^{11}$ while $K_4 \le \ker{\varphi_3}$. Finally, with the observation that $\mathcal{F}_4 \subseteq \QQ$,
    a direct calculation shows that 
    $| K_4 | = 2^{24}$. Lastly,   $\langle L_N \rangle  = | N_\MM ( {\tilde E} ) |$ follows from a repeated application of the Isomorphism Theorem, which reveals that
 \[
 \langle L_N \rangle \geq |K_4| \prod\nolimits_{i=1}^3 | \im{\varphi_i}| \geq  2^{24}.6.|\MT{24}|. 2^{11}    = | \normTwoBSq | \\
         = | N_\MM ( {\tilde E} )|.\qedhere
 \]
    \end{proof}
    
    \sg{$\normTwoBCu$}{norm2B3}{
   Let $L_E=\{z,z_1,w \}$ and $L_N=\{g_{3}, h_{3}\}$ be as in the accompanying code. The group $N_\MM(\langle L_E\rangle)$ is generated by $L_N$ and isomorphic to the maximal subgroup $ \normTwoBCu$.
    }
    
    \begin{proof}
    We verify that $\tilde E$ is $2\mt{B}$-pure of size $2^3$, and so $ N_\MM(\langle L_E\rangle)$ is of the required form. Moreover,  $L_N \leq   N_\MM(\langle L_E\rangle)$. We proceed as in Theorem~\ref{thm_norm2B2} to show  $\langle L_N\rangle= N_\MM(\langle L_E\rangle)$.  Consider
    four subsets $\mathcal{F}_1 = \{ g_3, h_3 \}$,
    $\mathcal{F}_2$, $\mathcal{F}_3$, and $\mathcal{F}_4$ of
    $\langle g_3, h_3 \rangle$, and, for each $1\leq i \leq 4$, let $K_i = \langle \mathcal{F}_i,  \dots,  \mathcal{F}_4 \rangle$. Let $\varphi_1\colon K_1 \to \mathrm{Aut}({{\tilde E}})$ be induced by conjugation; we compute that $| \varphi_1 ( \langle \mathcal{F}_1 \rangle ) |
    = 168$ while $K_2 \le \ker{\varphi_1}$. In particular, $K_2 \le
    C_\MM ( {\tilde E} )$. Now consider the action of $K_2$ on $\QQ_{\tilde E}/{\tilde E}$.  Since $K_2$ centralises $\tilde E$, it follows from \cite[Lemma~4.8]{2local1} that elements of
    $K_2$ permute $\QQ_{\tilde E}/{\tilde E}$ by conjugation, inducing a group isomorphic
    to some subgroup of $3 .\sym{6}$. Let 
    $\varphi_2\colon K_2 \to \mathrm{Aut}({\QQ_{\tilde E}/{\tilde E}})$ be the corresponding action homomorphism.
To compute images under this
    homomorphism,  verified generators for $\QQ_{\tilde E}$ are given in the accompanying code.
 We find that
    $| \varphi_2 ( \langle \mathcal{F}_2 \rangle ) |
    = 2160 = |3 .\sym{6}|$ while $K_3 \le \ker{\varphi_2}$. Noting that $\mathcal{F}_3, \mathcal{F}_4 \subseteq
        \GG \cong \QQ \udot \CO$, define
    $\varphi_3\colon K_3 \to \CO$ to be the restricted canonical homomorphism from $\GG$ into $\CO$. We find that $K_4 \le \ker{\varphi_3}$ and  $| \varphi_3 ( \langle \mathcal{F}_3 \rangle )
    | \ge  2^{16}$.
  Moreover by a direct calculation,
    $| \langle \mathcal{F}_4 \rangle | = 2^{23}$.
The claim now follows from a repeated application of the Isomorphism Theorem, which shows that
 \[
 \langle L_N \rangle \geq |K_4| \prod\nolimits_{i=1}^3 | \im{\varphi_i}| \geq  2^{23}.168.|3 .\sym{6}|. 2^{16}   = | \normTwoBCu | \\
         =| N_\MM ( {\tilde E} )|.\qedhere
 \]
    \end{proof}

  \subsection{The group $\normTwoBQu$.}

\begin{theorem}\label{thm_norm2B5}
 Let $L_E=\{z,z_1,w,k_4, k_5 \}$ and $L_N=\{g_{5}, h_{5}\}$ be as in the accompanying code. The group $N_\MM(\langle L_E\rangle)$ is generated by $L_N$ and isomorphic to the maximal subgroup $\normTwoBQu$.
\end{theorem}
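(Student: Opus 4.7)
The plan is to follow the template of \thm[norm2B2]{norm2B3}. I would first verify that $\tilde E$ is elementary abelian of order $2^5$ with all non-identity elements in class~2B and pairwise perpendicular, so that $\tilde E$ is singular by \prp{sing_perp}(c); then compute $\QQ_{\tilde E}$ and confirm $\QQ_{\tilde E}=\tilde E$, which by \prp{the2_5s}(b) identifies $\tilde E$ as a type-2 singular $2^5$ subgroup. By \prp{2localprop}, it follows that $N_\MM(\tilde E)$ is a maximal subgroup of $\MM$ of shape $\normTwoBQu$. A direct calculation in \mmgroup{} verifies $L_N\subseteq N_\MM(\tilde E)$, so $\langle L_N\rangle\le N_\MM(\tilde E)$.

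For the reverse inclusion it suffices to show $|\langle L_N\rangle|\ge |\normTwoBQu|=2^{35}\cdot 6\cdot |\psl{5}{2}|$. As in \thm{norm2B3}, I would choose subsets $\mathcal{F}_1=L_N$ and $\mathcal{F}_2,\mathcal{F}_3,\mathcal{F}_4\subseteq\langle L_N\rangle$, set $K_i=\langle\mathcal{F}_i,\ldots,\mathcal{F}_4\rangle$, and construct homomorphisms $\varphi_1,\varphi_2,\varphi_3$ with $K_{i+1}\le\ker\varphi_i$. Repeated application of the Isomorphism Theorem then yields $|\langle L_N\rangle|\ge |K_4|\prod_{i=1}^{3}|\im\varphi_i|$, and it remains to show the product attains the target order. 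Concretely, $\varphi_1\colon K_1\to\Aut(\tilde E)\cong\GL_5(2)$ is induced by conjugation, with image intended to realise $\psl{5}{2}$ of order $9999360$; $\varphi_3\colon K_3\to\CO$ is the restriction of the canonical projection from $\GG$; and $K_4$ is to lie in $\QQ$ and be enumerated directly. The middle map $\varphi_2$ should capture the $\sym{3}$ quotient of $C_\MM(\tilde E)$ by its $2$-radical.

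The principal obstacle, and the main departure from the two preceding 2-local arguments, is the construction of $\varphi_2$. In those earlier proofs, $K_2$ acted by conjugation on $\QQ_{\tilde E}/\tilde E$, producing a non-trivial subgroup of $\MT{24}$ or $3.\sym{6}$ and supplying the middle factor of the filtration. Here, however, \prp{the2_5s}(b) gives $\QQ_{\tilde E}=\tilde E$, so this quotient is trivial and carries no information. I would therefore need to identify a different natural action of $K_2$ that realises the $\sym{3}$ factor, for example via the action of $C_\MM(\tilde E)$ on a suitable $\tilde E$-invariant subquotient of the 2-radical $O_2(C_\MM(\tilde E))$ of order $2^{30}/2^5=2^{25}$, and then exhibit specific elements of $\mathcal{F}_2$ whose images generate a copy of $\sym{3}$. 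Once $\varphi_2$ is in place, the remaining orders---the $2^{11}$-type contribution from $\im\varphi_3$ inside $\CO$ and the large elementary abelian piece $K_4\le\QQ$---can be accumulated exactly as in \thm{norm2B3}, yielding the desired lower bound $|\langle L_N\rangle|\ge|\normTwoBQu|$ and hence equality.
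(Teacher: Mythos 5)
Your outline coincides with the paper's proof in every routine step: the verification that $\tilde E$ is singular of order $2^5$ with $\QQ_{\tilde E}=\tilde E$, the appeal to \prp{the2_5s} and \prp{2localprop} for type and maximality, and the four-term filtration $K_1\ge K_2\ge K_3\ge K_4$ with $\varphi_1$ acting on $\tilde E$ (image $\psl{5}{2}$, certified by elements of orders $15$ and $31$), $\varphi_3$ the restriction of $\GG\to\CO$, and $K_4\le\QQ$ enumerated directly (the actual exponents are $|\im\varphi_3|\ge 2^{14}$ and $|K_4|=2^{21}$, so that $2^{21}\cdot 2^{14}=2^{35}$ as your accounting requires). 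You have also correctly isolated the one genuinely new difficulty: since $\QQ_{\tilde E}=\tilde E$, the quotient used for $\varphi_2$ in the 2B$^2$ and 2B$^3$ cases is trivial here.

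However, at exactly that point your proposal stops short of a proof. "A suitable $\tilde E$-invariant subquotient of $O_2(C_\MM(\tilde E))$" is not a construction: to run the counting argument you must exhibit a concrete, computable homomorphism defined on $K_2$ with $K_3\le\ker\varphi_2$ and image of order at least $6$, and nothing in the paper's toolkit gives you direct computational access to $O_2(C_\MM(\tilde E))$ of order $2^{30}$. The paper's device goes in the opposite direction: rather than enlarging, it \emph{shrinks} $\tilde E$ to the index-two singular subgroup $V=\langle z,z_1,w,k_4\rangle\cong 2^4$, verifies $|\QQ_V|=2^7$, and lets $K_2\le C_\MM(\tilde E)\le C_\MM(V)$ act on $\QQ_V/V\cong 2^3$; by \cite[Lemma~4.10]{2local1} this action lands in $\sym{3}$, and one computes $|\varphi_2(\langle\mathcal F_2\rangle)|=6$ with $K_3\le\ker\varphi_2$. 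This is computable because membership in $\QQ_x$ is testable in \mmgroup{}, whereas your proposed subquotient is not presented with generators. So the skeleton is right, but the middle homomorphism --- the only step that distinguishes this case from \thm[norm2B2]{norm2B3} --- is missing, and the fix is to pass to a singular $2^4$ inside $\tilde E$ whose $\QQ_V$ is strictly larger than $V$, not to work inside the $2$-radical of the centraliser.
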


    \begin{proof}
We confirm that $\tilde E$ is $2\mt{B}$-pure of size $2^5$,
and that $| \QQ_{\tilde E} | = |{\tilde E}| = 2^5$. Hence,  $\tilde E$ is a singular $2\mt{B}^5$ subgroup of type 2 by \prp{the2_5s},  so $ N_\MM(\langle L_E\rangle)$ is maximal of the required shape by  \prp{2localprop}.
We also find that $L_N \leq   N_\MM(\langle L_E\rangle)$. To prove that $\langle L_N \rangle \leq   N_\MM(\tilde E)$, we proceed as in Theorem~\ref{thm_norm2B2}. 
Define subsets $\mathcal{F}_1 = \{ g_5, h_5 \}$,
    $\mathcal{F}_2$, $\mathcal{F}_3$, and $\mathcal{F}_4$ of
    $\langle L_N \rangle$, and, for $1\leq i \leq 4$, write $K_i = \langle \mathcal{F}_i,  \dots,  \mathcal{F}_4 \rangle$. Let $\varphi_1\colon K_1 \to \mathrm{Aut}({{\tilde E}})$ be the action homomorphism for $K_1$ acting by conjugation on $\tilde E$.  We verify that  $K_2 \le \ker{\varphi_1}$, so $K_2 \le
    C_\MM ( {\tilde E} )$.  We further claim that $ \varphi_1 ( \langle \mathcal{F}_1 \rangle ) 
    \cong \ling{5}{2} = \psl{5}{2}$.  We find a pair of elements with images under $\varphi_1$ of orders 15 and 31 respectively. Since $\psl{5}{2}$ has no proper subgroup divisible by both of these orders, the claim follows. To introduce the next homomorphism $\varphi_2$,  it is necessary to define the subgroup $V = \langle z,z_1,w,k_4 \rangle < \tilde E$ of order $2^4$.  We verify that $V$ is singular, and that $|\QQ_V| = 2^7$. Now $K_2 \leq  C_\MM ( {\tilde E} )\leq C_\MM (V )$, and so \cite[Lemma~4.10]{2local1} implies that $K_2$ permutes $\QQ_V/V$, inducing a group isomorphic to some subgroup of $\sym{3}$. Let 
    $\varphi_2\colon K_2 \to \mathrm{Aut}({\QQ_{V}/{V}})$ be the corresponding action homomorphism.  We compute that $| \varphi_2 ( \langle \mathcal{F}_2 \rangle )
    | = 6$ while $K_3 \le \ker{\varphi_2}$.

Now $\mathcal{F}_3, \mathcal{F}_4 \subseteq
        \GG \cong \QQ \udot \CO$,  so consider the restriction 
    $\varphi_3\colon K_3 \to \CO$ of the canonical homomorphism from $\GG$ into $\CO$.  We calculate that $K_4 \le \ker{\varphi_3}$ and that $| \varphi_3 ( \langle \mathcal{F}_3 \rangle )
    | \ge  2^{14}$.  Moreover, 
  a direct calculation shows that 
    $| \langle \mathcal{F}_4 \rangle | = 2^{21}$. Applying the Isomorphism Theorem, we deduce that the following, which proves the claim.
  \[
 \langle L_N \rangle \geq |K_4| \prod\nolimits_{i=1}^3 | \im{\varphi_i}| \geq  2^{21}.|\psl{5}{2}|.6. 2^{14}     = | \normTwoBQu | \\
         = | N_\MM ( \tilde E ) |.\qedhere
 \]
    \end{proof}

    
    \sg{$\normTwoArk$}{normArk}{
 Let $L_E=\{z,z_1,w,k_4, k_5,k_4^{\rho^{16}},
            k_5^{\rho}, k_5^{\rho^{15}}, k_5^{\rho^{16}},
            k_5^{\rho^{30}} \}$ and $L_N=\{g_{10}, h_{10}\}$ be as in the accompanying code. The group $N_\MM(\langle L_E\rangle)$ is generated by $L_N$ and isomorphic to the maximal subgroup $\normTwoArk$.    
    }
    
    \begin{proof}
We first verify that $\tilde E = \langle L_E \rangle$ is an \textit{ark}, as in the definition preceding Proposition \ref{2localprop}.  We first verify that $T_1 = \langle z, z^{\tau}, w, k_4, k_4^{\rho^{16}} \rangle$ is a singular subgroup of order $2^5$ and moreover that $| \QQ_{T_1} | = 64 = 2 \cdot 2^5$.  It follows
    from \prp{the2_5s} that $T_1$ is a singular $2^5$ subgroup
    of type 1.
    
    To extend this to ${\tilde E} = \langle L_E \rangle$, recall from the proof
    of Theorem~\ref{thm_norm2B5} that $\langle z, z^{\tau}, w, k_4, k_5 \rangle < {\tilde E}$
    is a singular 2B$^5$ of type 2. It contains the index 2 subgroup $V = \langle z,z_1,w,k_4 \rangle$ of ${\tilde E}$. Now 
 \cite[Corollary~4.12]{2local1} asserts that each singular 2B$^4$
    belongs to a unique 2B$^5$ of type 2 (and two of type 1).  
    On the other hand, there exists an element $\rho$ of order 31 (defined in the accompanying code ) which normalises both ${\tilde E}$ and $T_1$, but not $V < T_1$.  We compute that $\rho$ acts transitively on singular 2B$^5$s of type 2 that meet $T_1$ in a singular 2B$^4$. Hence $\tilde{E}$ is an ark, so $N_\MM ( {\tilde E} )$ is maximal by  \prp{2localprop}. 
      
We next confirm that $g_{10}, h_{10} \in N_\MM ( {\tilde E} )$.  To show that $ \langle L_N \rangle = N_\MM ( {\tilde E} )$,  we proceed in a similar way to Theorem~\ref{thm_norm2B2}.
Consider three subsets
    $\mathcal{F}_1 = \{ g_{10}, h_{10} \}$, $\mathcal{F}_2$,
    and $\mathcal{F}_3$ of $\langle g_{10}, h_{10} \rangle$, and for $1\leq i \leq 3$ define $K_i = \langle \mathcal{F}_i,  \dots ,
    \mathcal{F}_3 \rangle$. Let $\varphi_1\colon K_1 \to \mathrm{Aut}({{\tilde E}})$ be induced by conjugation; we find that $K_2 \le \ker{\varphi_1}$.  Moreover,
    we claim that $\varphi_1 ( K_1 ) \cong \orth+{10}{2}$.
    It follows from the structure of $N_\MM ( \tilde E )$
    (cf.\  \cite[Lemma~5.18]{2local1}) that
    $\varphi_1 ( K_1 ) \le \orth+{10}{2}$.  We find permutations in $\varphi_1 ( K_1 )$ of orders 17 and 31. Since 
 $\orth+{10}{2}$ has no maximal subgroups with order divisible
    by both 17 and 31, the claim follows.

    At this point, we note that the elements $x_{60}, r$ and $s$ defined in the accompanying code
    belong to $N_{\MM} ( \tilde E )$ and satisfy
    $g_{10} = ( x_{60}^{30} )^r$ and
    $h_{10} = ( x_{60}^3 )^s$. Applying $\varphi_1$ reveals that
    $x_{60}$ and $g_{10} h_{10}$ project to elements of order 60 and 21,
    respectively, in $\orth+{10}{2}$. Since the GAP Character Table Library \cite{gap_ctbllib}
    shows that all elements of order 60 in this group power up
    to conjugacy classes 2A and 20A thereof, $g_{10}$ and $h_{10}$ are
    standard generators in the sense of \cite{online_atlas}.
    
Note that $\mathcal{F}_2, \mathcal{F}_3 \subseteq
        \GG \cong \QQ \udot \CO$,  and consider
    $\varphi_2\colon  K_2 \to \CO$ defined as the restriction of the canonical homomorphism from $\GG$ into $\CO$.
    It is easily verified that
    $| \varphi_2 (
        \langle \mathcal{F}_2 \rangle
    ) | = 2^9$ while $K_3 \le \ker{\varphi_2}$.
    Finally,  a direct calculation shows that 
    $| \langle \mathcal{F}_3 \rangle | = 2^{17}$. Applying the Isomorphism Theorem, we deduce the following, which completes the proof.
  \[
 \langle L_N \rangle \geq |K_3| \prod\nolimits_{i=1}^2 | \im{\varphi_i}| \geq  2^{17}.|\orth+{10}{2}|.2^9     = | \normTwoArk | \\
         = | N_\MM( {\tilde E} ) |.\qedhere
 \]
    \end{proof}

    
    \section{\label{a5s}$\alt{5}$ in the Monster}
    
    \noindent The construction of the non-local subgroups of $\MM$ requires
    some preliminary results on subgroups of $\mathbb{M}$ isomorphic to $\alt{5}$.
    With the exception of $\pgl{2}{13}$, all non-local maximal subgroups contain
    a subgroup of this shape, and considering how each $\alt{5}$ can be extended
    to larger subgroups of $\MM$ has conversely played a significant role
    in the existing classifications of some of the maximal subgroups.
    
    The conjugacy classes of subgroups isomorphic to $\alt{5}$ in $\MM$ were classified by Norton \cite{anatomy}. There are eight conjugacy classes, as listed in Table~\ref{a5_classes}. Six classes are uniquely identified by the (unique) $\MM$-classes to which
    their elements of orders $2$, $3$, and $5$ belong, while the two containing
    2B, 3B, and 5B elements may be distinguished using their normalisers.
    We follow Norton in labelling the former sextet by their class fusions
    and the latter pair, which respectively occur as subgroups of
    the double cover of the Baby Monster and Thompson group, ``B'' and~``T''.  Instances of several of these types of $\alt{5}$ are constructed
    in \cite{dlp}. The following lemma reproduces these results,
   with the addition of a subgroup $\alt{5}$ of type ACA and a simplification
    due to \cite{four_fus} in the proof for the type B case.
    
    \begin{table}[t] 
            \begin{tabular}{llll}
                \toprule
                Type & Class Fusions & Centraliser & Normaliser \\
                \midrule
                AAA & 2A, 3A, 5A & $\alt{12}$ & $\normAAA$ \\
                BAA & 2B, 3A, 5A & $2.\MT{22}.2$ & \\
                BBA & 2B, 3B, 5A & $\MT{11}$ & $\MT{11} \times \sym{5}$ \\ 
                ACA & 2A, 3C, 5A & $\unt{3}{8}{:}3$ & $\normACA$ \\
                BCA & 2B, 3C, 5A & $2^{1+4} (\alt{4} \times \alt{5})$ & \\
                BCB & 2B, 3C, 5B & $\dih{10}$ & \\
                B & 2B, 3B, 5B & $\sym{3}$ & $\alt{5}{:}4$ \\
                T & 2B, 3B, 5B & 2 & $\sym{5} \times \sym{3}$ \\
                \bottomrule
            \end{tabular}\label{a5_classes}%

            \vspace*{1ex}
            
            \caption{\small{The conjugacy classes of $\alt{5}$ in $\MM$, the $\MM$-classes
            to which their unique conjugacy classes of elements of orders 2, 3,
            and 5 fuse, their centralisers, and (where stated) their normalisers; this table is adapted from Norton \cite{anatomy}, as well as \cite[p. 234]{atlas}.}}
    \end{table}

    \begin{lemma}
      \label{a5_lemma}
        For each type $\text{\rm UVW}\in\{\text{\rm AAA, BAA, BBA, ACA, BCA, BCB, B, T}\}$, the elements $g_{2UVW}$ and $g_{3UVW}$ as given in the accompanying code generate a subgroup $\alt{5}$ of $\MM$ of type ${\rm UVW}$.
    \end{lemma}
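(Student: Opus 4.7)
The plan is to verify the claim separately for each of the eight types UVW, treating all eight cases by essentially the same two-step scheme, with one additional step needed to separate types B and T.

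\emph{Step 1 (Isomorphism to $\alt{5}$).}  A direct computation in \mmgroup{} checks that $g_2 := g_{2\mathrm{UVW}}$ has order $2$, $g_3 := g_{3\mathrm{UVW}}$ has order $3$, and $g_2 g_3$ has order $5$.  The relations $a^2 = b^3 = (ab)^5 = 1$ form a presentation of $\alt{5}$, so Von Dyck's Theorem (see \cite[Theorem 2.53]{hcgt}, used already in the proof of \thm{norm11_2}) gives a surjection $\alt{5} \twoheadrightarrow \langle g_2, g_3\rangle$.  Since $\alt{5}$ is simple and $g_2 \neq 1$, this map is an isomorphism, establishing $\langle g_2, g_3 \rangle \cong \alt{5}$.

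\emph{Step 2 (Class fusions).}  Each of $g_2$, $g_3$, and $g_2 g_3$ has prime order, and so belongs to one of the classes $2\mt A$, $2\mt B$ (respectively $3\mt A$, $3\mt B$, $3\mt C$; respectively $5\mt A$, $5\mt B$).  For each element, we conjugate it into $\GG$ (as \mmgroup{} supports for any $2\mt B$ involution, and as the code arranges for the remaining elements) and evaluate the irreducible character $\chi_\MM$.  The known values of $\chi_\MM$ on these classes $-$ in particular $\chi_\MM(3\mt A) = 782$, $\chi_\MM(3\mt B) = 53$, $\chi_\MM(3\mt C) = -1$, $\chi_\MM(5\mt A) = 133$, $\chi_\MM(5\mt B) = 8$, as used repeatedly in Section \ref{oddlocal}, together with the standard values on $2\mt A$ and $2\mt B$ $-$ uniquely identify each class.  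For the six types AAA, BAA, BBA, ACA, BCA, BCB, the triple of fused classes already pins down the type by Table \ref{a5_classes}.

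\emph{Step 3 (Distinguishing B and T).}  Types B and T share the fusion $(2\mt B, 3\mt B, 5\mt B)$, so Step 2 cannot separate them.  Table \ref{a5_classes} shows that $|\cm{\alt{5}}| = 6$ for type B whereas $|\cm{\alt{5}}| = 2$ for type T; equivalently, a type-B copy of $\alt{5}$ has a non-trivial element of order $3$ in its centraliser in $\MM$, while a type-T copy does not.  For type B, it therefore suffices to exhibit (as the code does) an explicit element of order $3$ commuting with both $g_{2\mathrm B}$ and $g_{3\mathrm B}$.  For type T, we appeal to the already constructed copy of $\normThreeC = \sym{3} \times \Th$ from \thm{norm3C}: the alternating group $\alt{5}$ generated by $g_{2\mathrm T}$, $g_{3\mathrm T}$ can be exhibited inside the $\Th$ factor, whose only subgroups $\alt{5}$ of fusion $(2\mt B, 3\mt B, 5\mt B)$ are of type T (cf.\ the normaliser entry in Table \ref{a5_classes}), so its centraliser in $\MM$ is forced to be of order $2$.

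The main obstacle is Step 3: the B versus T distinction is invisible at the character level, and requires either an explicit centralising element or an identification inside a known containing subgroup.  Steps 1 and 2 are routine once the explicit generators are in hand, and amount to order checks and a handful of character evaluations in \mmgroup{}.
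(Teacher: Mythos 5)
Your Steps 1 and 2 match the paper's proof exactly: verify a presentation for $\alt{5}$ and invoke Von Dyck's Theorem together with simplicity, then read off the class fusions from $\chi_\MM$; this settles every type except B and T. The problem is Step 3. You take from Table \ref{a5_classes} that a type-B $\alt{5}$ has centraliser $\sym{3}$ and a type-T one has centraliser $2$, and accordingly propose to certify type B by exhibiting an element of order $3$ commuting with $g_{2B}$ and $g_{3B}$. But those two centraliser entries are incompatible with the normaliser column of the same table: for $A\cong\alt{5}$ with centraliser $C$ one has $A\times C\leq N_\MM(A)$ and $N_\MM(A)/(A\,C)\hookrightarrow\mathrm{Out}(\alt{5})\cong 2$, which forces $C\cong 2$ when $N_\MM(A)\cong\alt{5}{:}4$ (type B) and $C\cong\sym{3}$ when $N_\MM(A)\cong\sym{5}\times\sym{3}$ (type T). So the centraliser entries for B and T in the table are transposed, and your test is exactly backwards: a genuine type-B $\alt{5}$ admits \emph{no} centralising element of order $3$, so the element you propose to exhibit does not exist, and if the code did produce one it would prove the group to be of type T rather than B.

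The paper runs the argument the other way round. For $\langle g_{2T},g_{3T}\rangle$ it exhibits an order-$3$ element $y_3$ centralising the $\alt{5}$ and observes that all order-$3$ elements of the type-B normaliser $\alt{5}{:}4$ lie inside the $\alt{5}$ and so cannot centralise it; this rules out type B. For $\langle g_{2B},g_{3B}\rangle$ it exhibits an element $y_4$ of order $4$ normalising the $\alt{5}$ whose square centralises it (hence lies outside it, as $\alt{5}$ has trivial centre), which is impossible in the type-T normaliser $\sym{5}\times\sym{3}$ because every order-$4$ element there squares into the $\alt{5}$ factor. Your alternative route for type T via an embedding into the $\Th$ factor of $\sym{3}\times\Th$ is workable in principle --- such an $\alt{5}$ is centralised by a 3C element, which again excludes type B --- but your stated conclusion that its centraliser then has order $2$ is also wrong, and your type-B certification must be replaced by something like the paper's order-$4$ argument.
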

    \begin{proof}
      In each case, we verify that the given generators satisfy a presentation for $\alt{5}$. The generators are non-trivial, so by  Von Dyck's Theorem and the simplicity of $\alt{5}$, they generate a subgroup $\alt{5}$. Since the conjugacy classes in $\MM$ of
        elements of orders $2$, $3$, and $5$ are distinguished by their values
        under $\chi_\MM$, we can confirm the type of the $\alt{5}$, with the exception of types B and T.  To complete the proof, it must be shown that
        $\langle g_{2B}, g_{3B} \rangle$ is not of type T and
        $\langle g_{2T}, g_{3T} \rangle$ not of type B. The latter case
        is easier: in the code we exhibit an element $y_3$ that has order $3$ and centralises
        $\langle g_{2T}, g_{3T} \rangle$, whereas all elements of order $3$
        in the normaliser $\alt{5}{:}4$ of an $\alt{5}$ of type B  lie
        in the $\alt{5}$ and therefore do not  centralise it.
         As for $\langle g_{2B}, g_{3B} \rangle$, the normaliser of
        an $\alt{5}$ of type T has structure $\sym{5} \times \sym{3}$ and cannot
        contain an element of order $4$ whose square lies  outside the
        $\alt{5}$. In the code we exhibit an element $y_4$ with these properties: we verify that it has   order $4$, commutes with $g_{2B}$, and satisfies
        $g_{3B}^{y_4} = g_{2B} g_{3B}^2 g_{2B} g_{3B} g_{2B} g_{3B}^2$; in particular, $y_4$ belongs to the normaliser of $\alt{5}$. On the other hand, ${y_4^2}$ centralises   $\langle g_{2B}, g_{3B} \rangle \cong \alt{5}$; since $\alt{5}$
        has a trivial centre,  $y_4^2 \notin
            \langle g_{2B}, g_{3B} \rangle$, as required.
    \end{proof}
    
   \begin{remark}\label{good_a5}Our subgroup of type T in  \lem{a5_lemma} is different to that given
        in \cite{dlp}; ours is chosen to intersect a conjugate of our $\alt{5}$ of type B in a subgroup  $\dih{10}$, as this simplifies the proof of   \thm{no_l2_59}.
    \end{remark}
    
  
    \section{\label{psgl}The Projective Linear Maximal Subgroups of the Monster}

    \noindent We discuss projective linear groups separately because confirmation of their construction is much easier than in most other cases due to the availability of presentations. Specifically, in the code we use Sunday's presentation \cite{sl_pres_1} for $\lins{2}{q}$ where $q$ an odd prime power, and Robertson and Williams' presentation \cite[Theorem~4]{pgl_pres} for $\pgl{2}{p}$ with $p$ a prime, with the correction of  Hert   \cite[Theorem 3]{pgl_pres_fixed} who pointed out that the presentation for $\pgl{2}{p}$ in \cite[Theorem~A]{pgl_pres} is in fact
    a presentation for $2 \times \psl{2}{p}$ whenever $2$ is
    a quadratic residue modulo $p$. A common step in each of the following proofs is to verify a presentation; to keep the exposition short, we do not reiterate this every time, and only provide additional information on maximality if necessary. 

    \sg{$\psl{2}{71}$}{l2_71}{
        The elements $g_{71}, h_{71}$ in the accompanying code generate
        a maximal subgroup  $\psl{2}{71}$.
    }
    
    \begin{proof}
       It follows from 
        \cite[Theorem~1, \S 5.4]{subgroups_A5} that  all $\psl{2}{71} < \MM$ are
        conjugate and maximal.
    \end{proof}

    \sg{$\psl{2}{41}$}{l2_41}{
        The elements $g_{41}, h_{41}$ in the accompanying code generate  a maximal subgroup  $\psl{2}{41}$.
    }
    
    \begin{proof}
       It follows from \cite[Theorem~1]{l2_41} that all $\psl{2}{41} < \MM$ are conjugate and maximal.
    \end{proof}

    \sg{$\pgl{2}{29}$}{pgl2_29}{
        The elements $g_{29}, h_{29}$  in the accompanying code generate  a maximal subgroup 
        $\pgl{2}{29}$.
    }
    
    \begin{proof}
     It follows from \cite[Theorem~1]{pgl2_29} that all $\pgl{2}{29} < \MM$ are conjugate and maximal.
    \end{proof}

    \sgNEW{$\pgl{2}{19}$}{pgl2_19}{
        The elements $x_2, x_{19}$  in the accompanying code generate  a maximal subgroup 
        $\pgl{2}{19}$.
    }{This theorem is adapted from \cite[\S6]{four_fus}.}

    \begin{proof}
        It follows from  \cite[Theorem~1]{subgroups_A5}   that $\langle x_2, x_{19} \rangle \cong \psl{2}{19}$ is maximal in $\MM$
        if and only if it contains an $\alt{5}$ of type B; the claim follows since         $( x_{19}^2 x_2 )^2$ and $x_2 x_{19}^2 x_2 x_{19}$ are
        the generators $g_{2B}$ and $g_{3B}$ of the $\alt{5}$ of type B 
        in \lem{a5_lemma}.
    \end{proof}

    \sgNEW{$\pgl{2}{13}$}{pgl2_13}{
        The elements $u, g_{13} $  in the accompanying code generate  a maximal subgroup
        $\pgl{2}{13}$.
    }{This theorem is adapted from  \cite[\S3]{dlp}.}

    \begin{proof}
        To show that this subgroup
        is maximal, we first note that there are known to be exactly
        three conjugacy classes of $\psl{2}{13}$ in $\MM$. Two of these,
        identified by Norton \cite[\S5]{anatomy}, have centralisers of shapes
        $3^{1+2}.2^2$ and $3$; the third, found by Dietrich \etal \cite{dlp},
        has trivial centraliser and extends to a maximal subgroup $\pgl{2}{13}$.
        It thus suffices to exhibit $\psl{2}{13} < \langle u, g_{13} \rangle$
        which is not centralised by an element of order $3$.  As such, let $x_2 = ( ug_{13}^2 )^2 \in
        \langle u, g_{13} \rangle$; we verify  $\langle x_2, g_{13} \rangle \cong \psl{2}{13}$. It is also clear
        that $\cma{x_2, g_{13}} \le \cm{g_{13}}$, One may recall
        from the proof of \thm{norm13A} that the latter is embedded in the direct product
        $13{:}6 \times \psl{2}{13}$ of $\langle g_{13}, y_{12}^2 \rangle \cong
            13{:}6$ and its centraliser. Writing
        $g_6 = g_{13}^{-1} x_2 g_{13}^7 x_2 g_{13}^2 x_2 \in
            \langle x_2, g_{13} \rangle$, the fact that $x_6 = y_{12}^{-2} g_6$
        commutes with both $g_{13}$ and $y_{12}^2$ then reveals that
        $g_6$ is the product of $y_{12}^2 \in 13:6$ and $x_6 \in \psl{2}{13}$. It follows that
        any element of $13{:}6 \times \psl{2}{13}$ which commutes with $g_6$
        must also centralise both $y_{12}^2$ and $x_6$. Hence
        $\cma{g_{13}, x_2} \le \cma{g_{13}, g_6} \le \cma{g_{13}, y_{12}^2, x_6}
            = C_{\psl{2}{13}} ( x_6 )$, with $x_6$ an element of
        order $6$ in  $\cma{g_{13}, y_{12}^2} \cong \psl{2}{13}$. The Atlas implies that this centraliser is  $\langle x_6 \rangle$, and we show that $x_6^2$ (of order 3) does not commute with $x_2$. Thus, no element of order $3$ centralises $\langle x_2, g_{13} \rangle$.
    \end{proof}

    \subsection{\label{correction}A correction for $\psl{2}{59}$}\label{sec_psl259}
    
    Attempting to reproduce the methodology of Holmes and Wilson's \cite{l2_59} construction of a maximal $\psl{2}{59}$, we come to the conclusion that no such subgroup exists. Our argument is spelled out in the following proof; it reveals that there is instead a new maximal subgroup of $\MM$, which is isomorphic to $59{:}29$.
    
    \begin{theorem}
        \label{thm_no_l2_59}
        There is no subgroup $\psl{2}{59}$ of $\MM$.
        Consequently, the normalisers $59{:}29$ of elements of
        order $59$ form a class of maximal subgroups of $\MM$.
    \end{theorem}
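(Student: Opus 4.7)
The plan is to follow the same strategy Holmes and Wilson adopted, but to carry it out fully in \mmgroup{} until a contradiction emerges. Recall that the simple group $\psl{2}{59}$ is generated by any two maximal $\alt{5}$ subgroups meeting in a $\dih{10}$, and inside $\MM$ the only $\alt{5}$ conjugacy classes whose elements have the right fusion to lie inside a $\psl{2}{59}$ (which contains only 2B, 3B, 5B-elements) are types B and T. Moreover, by \lem{a5_lemma} and \rmk{good_a5}, the explicit generators $g_{2\mt{B}}, g_{3\mt{B}}$ and $g_{2\mt{T}}, g_{3\mt{T}}$ in the accompanying code have been chosen precisely so that $\langle g_{2\mt{B}}, g_{3\mt{B}} \rangle \cap \langle g_{2\mt{T}}, g_{3\mt{T}} \rangle$ is a $\dih{10}$. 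The first step of the proof is therefore to form the group $H = \langle g_{2\mt{B}}, g_{3\mt{B}}, g_{2\mt{T}}, g_{3\mt{T}} \rangle$ generated by these two $\alt{5}$s; if a maximal $\psl{2}{59}$ existed, a suitable conjugacy-class-matched pair of $\alt{5}$s of types B and T meeting in $\dih{10}$ would have to generate it (or a subgroup of it).

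The second step is to show computationally in \mmgroup{} that $H$ is \emph{not} $\psl{2}{59}$. The expected witness is an element of $H$ whose order does not divide $|\psl{2}{59}| = 59 \cdot 29 \cdot 30$, for instance an element of order that is incompatible with the spectrum of $\psl{2}{59}$. Once such an element is exhibited, we conclude that the only groups containing both $\alt{5}$s with the given intersection $\dih{10}$ strictly contain $\psl{2}{59}$ (as abstract groups). Cross-referencing with Norton's classification of pairs of $\alt{5}$s in $\MM$ (or by iterating over $\MM$-conjugates of the type T subgroup within the normaliser of the chosen type B one) one concludes that no pair of $\alt{5}$ subgroups of types B and T meeting in $\dih{10}$ generates $\psl{2}{59}$ inside $\MM$, and therefore no subgroup of $\MM$ is isomorphic to $\psl{2}{59}$.

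For the second assertion, the plan is purely group-theoretic. Let $g \in \MM$ have order $59$; the two $\MM$-classes of such elements are inverses, so $\langle g \rangle$ is normalised by an element of order $2$ acting by inversion, and one checks via Sylow theory in $C_\MM(g)$ that $N_\MM(\langle g \rangle) \cong 59{:}29$. All such normalisers are $\MM$-conjugate, so they form a single class. To see maximality, let $M$ be any maximal subgroup of $\MM$ containing $59{:}29$. Then $59 \mid |M|$, and inspection of Table \ref{maxsgtab} (whose completeness, modulo the $59{:}29$ entry, follows from the earlier classification work and \thm{main_thm}) shows that the only candidate having order divisible by $59$ was $\psl{2}{59}$, which the first part has just ruled out. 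Hence $M = 59{:}29$, proving maximality.

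The main obstacle is the first step: one must be convinced that the particular $\alt{5}$s of types B and T constructed in \lem{a5_lemma} are a ``generic'' such pair, in the sense that their failure to generate $\psl{2}{59}$ really does preclude any pair of such $\alt{5}$s from doing so. This is exactly where \rmk{good_a5} is crucial: our type T subgroup was deliberately chosen to meet a conjugate of our type B subgroup in a $\dih{10}$, so that the configuration we test is the very one demanded by the hypothetical $\psl{2}{59}$. Any residual ambiguity (for example, several $\MM$-orbits on such $(B,T)$-pairs with $\dih{10}$ intersection) can be dealt with by running the same generation test over the finite list of orbit representatives, all of which can be enumerated in $\GG$-coordinates using the centraliser structure of a common $\dih{10}$.
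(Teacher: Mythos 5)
There is a genuine gap in your treatment of the first assertion. Your plan is to test whether the single explicit pair $\langle g_{2B}, g_{3B}\rangle$, $\langle g_{2T}, g_{3T}\rangle$ generates a $\psl{2}{59}$, and you concede at the end that one must still check that this pair is ``generic''. That concession is exactly the content of the proof, and your proposed fix (running the test over ``the finite list of orbit representatives'' of such pairs) is left entirely unspecified; worse, the configuration you test is not the right one. You assume that a putative $G\cong\psl{2}{59}$ contains one $\alt{5}$ of type B \emph{and} one of type T meeting in a $\dih{10}$, but the two $\psl{2}{59}$-classes of $\alt{5}$ could a priori both fuse to type B in $\MM$, or both to type T; nothing forces a mixed pair. (Also, your inference that a bad-order element in $H$ shows that any group containing both $\alt{5}$s ``strictly contains $\psl{2}{59}$'' is a non sequitur: it only shows $H\not\cong\psl{2}{59}$.) Finally, your maximality argument for $59{:}29$ appeals to the completeness of Table \ref{maxsgtab}, which is uncomfortably circular, since the $59$-entry of that table is precisely what is being corrected.

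The paper closes the gap differently: it replaces the second $\alt{5}$ by a single involution. Since each of types B and T forms one $\MM$-class, a putative $G$ may be conjugated so that it contains $\langle g_{2B}, g_{3B}\rangle^c$ or $\langle g_{2T}, g_{3T}\rangle$, where $c$ is chosen (\rmk{good_a5}) so that both copies contain the \emph{same} $\dih{10}=\langle b_5,z\rangle$. Because $N_G(\langle b_5\rangle)\cong 2\times\dih{30}$ in $\psl{2}{59}$, the group $G$ must contain a 2B involution $x$ centralising this $\dih{10}$; the centraliser of the $\dih{10}$ in $\MM$ has order $30000$ and contains exactly $500$ such involutions, and for each candidate $x$ the orders of a few explicit products with elements of the relevant $\alt{5}$ are tested against the spectrum of $\psl{2}{59}$, in both the type-B and the type-T case. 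This is what makes the search finite, exhaustive, and case-complete. For maximality of $59{:}29$, the paper argues directly that a maximal overgroup $M=N_\MM(S^m)$ would force either $M=N_\MM(59)=59{:}29$ or an element of order $59$ in some $\Aut(S)/S$, which never occurs; this avoids invoking the very classification being amended.
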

    
    \begin{proof}
      The group $\psl{2}{59}$ has maximal subgroup $\alt{5}$ and every  element of order $5$
        has a normaliser $\dih{60} \cong 2 \times \dih{30}$.  Norton and Wilson \cite{anatomy2} have shown that elements of order $2$, $3$, or $5$ in a maximal $G=\psl{2}{59}$ in $\MM$ must lie in $\MM$-classes 2B, 3B, or 5B,
        respectively. It follows from Table~\ref{a5_classes} that every subgroup
        $\alt{5}$ of $G$ is of type  B or T. Let
        $g_{2B}$, $g_{3B}$, $g_{2T}$, and $g_{3T}$ be as in \lem{a5_lemma},
        so that  $\langle g_{2B}, g_{3B} \rangle$
        and $\langle g_{2T}, g_{3T} \rangle$ are subgroups isomorphic to $\alt{5}$s of the required types. In the code we fix an element $c$ such that the putative $G$ contains
        at least one of $\langle g_{2B}, g_{3B} \rangle^c$
        and $\langle g_{2T}, g_{3T} \rangle$, see also Remark \ref{good_a5}. Let $b_5 = g_{2T} g_{3T}$; a computation reveals that 
        $g_{2T} g_{3T} = b_5 = g_{2B}^c g_{3B}^c$ and
        $g_{2T}^{g_{3T} g_{2T} g_{3T}^2} = (
            g_{2B}^{g_{3B} g_{2B} g_{3B}^2}
        )^c = z$, where $z$ is the central involution in $\GG$.
        Our assumed $G \cong \psl{2}{59}$ therefore contains $\langle b_5, z \rangle$, which we confirm to be $\dih{10}$. On the other hand, $\dih{10}$ is
        a group with trivial centre, normalising elements of order $5$ in $G$; since
        such elements have normalisers $2 \times \dih{30}$ in $G$, the group
        $\langle b_5, z \rangle$ must be centralised by an involution
        $x \in G$; in particular, we must have $x \in \textup{2B}$ as shown above.  We check that no suitable $x$ exists, and for this it suffices to consider all 2B
        involutions in the centraliser of the $\dih{10}$ we found above. For these subgroups $\dih{10}$ in $\alt{5}$ of type B or T,  Norton \cite[Table~4]{anatomy} proves
        that the relevant centralisers have shape $5^3.( 4 \times \alt{5} )$; thus,  any list of $| 5^3.( 4 \times \alt{5} ) | = 30000$ distinct elements commuting with $b_5$ and $z$ generate the whole centraliser. In the code, we generate
        such a list from products of $6$ pre-computed generators. Testing orders and $\chi_\MM$ values  yields
        that exactly $500$ involutions of type 2B remain. This coincides with the computations reported by Holmes and Wilson \cite[p.~13]{l2_59}, and gives confidence that our construction so far is in line with the one in \cite{l2_59}. In \cite{l2_59} it is claimed that among these $500$ involutions, some elements $x$ extend $\alt{5}$ to a subgroup $\psl{2}{59}$. However, we cannot confirm this:  note that elements in $\psl{2}{59}$ have orders
        in $\mathcal{O} = \{ 1, 2, 3, 5, 6, 10, 15, 29, 30, 59 \}$.
        In particular, if $G$ contains a subgroup $\alt{5}$ of type B, then all of $x ( g_{2B}^{g_{3B} g_{2B}} )^c$, $xg_{3B}^c$,
            and $g_{2B}^c x g_{3B}^c$ have orders in $\mathcal{O}$
        for some involution $x$ found above. Similarly, if $G$ contains a subgroup $\alt{5}$ of type T, then the orders of $xg_{2T}^{g_{3T} g_{2T}}$ and
        $g_{2T} xg_{3T}$ must belong to $\mathcal{O}$ for some such $x$. We run over all $500$ involutions and conclude that none of them satisfies these conditions. This contradiction allows us to conclude that $\MM$ has no subgroup $G=\psl{2}{59}$.

        The existence of a subgroup $H=59{:}29$ of $\MM$ follows from \cite{odd_local}. It remains to prove maximality.  If $H$ is not maximal, then $H<M$ for some maximal $M<\MM$. Note that $M=N_\MM(T)$ where $T=S^m$ is a direct product of isomorphic simple groups $T$. 
   If $59$ divides $|T|$, then $m=1$ and so $M=N_\MM(59) = 59{:}29$ by \cite{odd_local}, contradicting our assumption that $H$ is not maximal.  If $59$ does not divide $|T|$, then the normal $59<H$ normalises $T=S^m$.  Since no prime power $p^{59}$ divides $|\MM|$, we deduce that $m<59$ and therefore $59<H$ does not permute the $m$ factors of $T$. Thus this $59$ must normalise each $S$, so induces an element of order $59$ in $\Aut(S)/S$. Running over all simple subgroups $S$ that could be involved in $\MM$, we determine that $59$ never divides $|\Aut(S)/S|$. Thus, this case is also not possible. Hence, $H$ is maximal, as claimed.
    \end{proof}
    
\begin{remark}\label{remget59}
Establishing elements in \mmgroup{} that generate a subgroup $59{:}29$ constitutes ongoing work. While elements of orders $59$ and $29$  may be easily found by random search,  constructing the normaliser of a given element of order $59$ is very difficult -- a na\"ive approach yields an approximately $1$ in $10^{48}$ chance of success.  An adaptation of a method developed by Bray et al.\ \cite{find_47_23} for an analogous problem for the Baby Monster increases the probability of success to around 1 in $10^8$. 
\end{remark}

    
    \section{\label{nonlocal}Non-Local Maximal Subgroups of the Monster}

\noindent We now consider the non-local maximal subgroups of $\MM$. 
    
    \begin{procedure}\label{nonlocalproc}
    For many non-local subgroups of $\MM$, it is convenient to use
    a modification of Procedure~\ref{provesg} when performing verifications; we usually proceed as follows.
    
    \begin{enumerate}
        \item We exhibit generators $L_E$ for 
            a subgroup $E$ such that $\nm{E}$ is a maximal subgroup of $\MM$ of
            the claimed structure; the results of Section \ref{a5s}, \cite[p. 234]{atlas} and
            \cite[\S4, \S5]{anatomy} often provide the necessary data.
        \item We propose generators $L_N$ for $\nm{E}$ and first ascertain that $L_N$ normalises $E$. Some generators in $L_N$ can be written as a product $gh$ where $g \in \cm{E}$ and $h \in E$; for others, it may be
            directly verified that they map $L_E$
            to elements of $E$ under conjugation.
        \item The remaining steps are then to confirm that $L_N$ generates $\nm{E}$. We usually begin
            by exhibiting words for the generators of $E$ in elements of $L_N$, so $E\leq \langle L_N\rangle$, and then show that $C_\MM(E)\leq \langle L_N\rangle$.  The elements of $\cm{E}$ used to write
            products in Step (2) belong to $\langle L_N\rangle$  since those of $E$ do,
            so that it frequently suffices to show these elements generate $\cm{E}$.
        \item Finally, we demonstrate that $L_N$  extends
            $E \cm{E}$ to $\nm{E}$, usually by exhibiting elements
            which induce suitable outer automorphisms of $E$.
    \end{enumerate}
    \end{procedure}

\noindent  Throughout the following, we refer to the elements listed in \lem{a5_lemma}.

    \sg{$\normAAA$}{normAAA}{
        Let $L_E=\{g_{2AAA}, g_{3AAA}\}$ and $L_N=\{g_{AAA}, h_{AAA}, n\}$ be as 
        in the accompanying code. Then $L_N$ generates
        the maximal subgroup $N_\MM(E) \cong \normAAA$.
    }

    \begin{proof}
        According to the Atlas,  $N_\MM(L_E)\cong \normAAA$, and it remains to prove that $\langle L_N\rangle=N_\MM(E)$, see also \lem{a5_lemma}. We verify  that 
         $x_3 = g_{AAA} g_{2AAA}$ and $x_{10} = h_{AAA} g_{3AAA}^{-1}$ centralise $E$, and confirm that  $n$  centralises
        $g_{2AAA}$ and that $g_{3AAA}^n\in E$. The latter is checked via an enumeration of $E$, which also confirms that conjugation by $n$ induces an outer automorphism of $\alt{5}$.       We verify  $g_{2AAA} = g_{AAA}^3$ and $g_{3AAA} = h_{AAA}^{10}$, so $E$ and $x_3,x_{10}$ lie in $\langle L_N\rangle$. We show that
        $\langle x_3, x_{10} \rangle$ is the full centraliser $\alt{12}$ of $E$ by verifying that the generators $x_3$, $x_{10}$ satisfy the group presentation of $\alt{12}$ obtained by   Coxeter and Moser
        \cite[\S6.4]{cox_mos}. Since $n$ induces an outer automorphism,   $\langle L_N\rangle\cong \normAAA$.
    \end{proof}
 
A subgroup $\alt{5} \times \alt{12}$ has already been constructed in \cite{dlp}; in our code we provide a conjugating element that maps $\alt{5}$ into $\GG$.

\sg{$\normASixCu$}{normA6_3}{
        Let $L_U=\{x_{A6},y_{A6}\}$, $t$, and  $L_N=\{g_{A6}, h_{A6}\}$ as in
        in accompanying code. Then $L_N$ generates the maximal subgroup $\nma{E} \cong \normASixCu$ where $E=\langle U,U^t,U^{t^2}\rangle$ with $U=\langle L_U\rangle\cong \alt{6}$.
    }
    
\begin{proof}
     
        The Atlas shows that the maximal subgroups $\normASixCu$ are precisely the normalisers of \ $\alt{6} \times \alt{6} \times \alt{6}$, where each $\alt{6}$ contains 2A, 3A, 4B, and 5A elements. We verify that this holds for 
       the subgroup $U = \langle x_{A6}, y_{A6} \rangle \cong \alt{6}$ using the presentation given in \cite[\S6.4]{cox_mos}. Moreover, $U$ contains elements of a subgroup $\alt{5}$ of type AAA since it contains
        $g_{2AAA} = x_{A6} y_{A6} x_{A6} y_{A6}^2 x_{A6}^2$ and
        $g_{3AAA} = x_{A6} y_{A6}^2 x_{A6}^2 y_{A6} x_{A6}$ as in Lemma \ref{a5_lemma}.  It follows from \cite[Table~5]{anatomy} that 
        there is a unique conjugacy class of groups $\alt{6} < \MM$ containing
        such elements. This shows that  $U, U^t, U^{t^2}$ are of the required type; we verify that they commute and that $t$ has order $3$. An explicit calculation shows that $g_{A6}$ and $h_{A6}$ permute the groups $U$, $U^t$, $U^{t^2}$ cyclically, and we verify $E\leq \langle L_n\rangle\leq N_\MM(E)$ by writing the generators $L_U$ as words in elements of $L_N$, and confirming that $g_{A6}^4 \in \langle L_N\rangle$. It remains to show that $\langle L_N\rangle$ contains the factor $2 \times \sym{4}$. The element  $c_0 = ( g_{A6}^3 h_{A6}^2 g_{A6} h_{A6}^3)^6$
        centralises $x_{A6}, x_{A6}^t$ and $y_{A6}^t$, but $y_{A6}^{c_0} \notin y_{A6}^{\langle x_{A6} \rangle}$. Any element of $E$ which
        centralises $x_{A6}$ is  the product of an element
        centralising $U$ and an element of $C_U ( x_{A6} ) =
            \langle x_{A6} \rangle$, so we deduce that    $c_0 \notin E$, so  $\langle E, c_0 \rangle$        is at least twice as large as $E$ by \lem{ext_lemma}.
        The element $c_1 =
            ( h_{A6}^2 g_{A6}^3 h_{A6}^3)^6$ centralises
        $U$ and induces an outer automorphism of $U^t$ which
        adjoining $c_0 \in \cm{U^t}$ cannot produce; thus, $\langle E, c_0, c_1 \rangle$ is
        at least $2^2 = 4$ times as large as $E$.   
        We next check that  $U^t$, $U^{t^2}$, and $c_1$
        centralise $U$; on the other hand,
        since $c_0 \notin U$ is an involution normalising $U$,
        the Isomorphism Theorem shows $\langle x_{A6}, y_{A6}, c_0 \rangle/U
            \cong \langle c_0, U \rangle/U \cong
            \langle c_0 \rangle/( \langle c_0 \rangle \cap U ) =
            \langle c_0 \rangle$, so that all automorphisms of $U$
        induced by the group generated so far arise from the product
        of an element of $U$ and $\langle c_0 \rangle$. Of these,
        $4 \cdot 2 = 8$ elements centralise $x_{A6}$.
        Establishing that the conjugate of $y_{A6}$ by $c_2 = g_{A6}^3 y_{A6}^{-1} \in
            \langle g_{A6}, h_{A6} \rangle$ is not among the $8$ elements in the $\langle x_{A6},c_0\rangle$-class of $y_{A6}$ therefore suffices to show that
        $\langle E, c_0, c_1, c_2 \rangle$ is at least $2^3 = 8$
        times as large as $E$. Moreover, recalling that conjugation by elements
        of $\langle g_{A6}, h_{A6} \rangle$ permutes the factors of $E$,
        checking that $U^{tc_2}$ commutes with $U^{t^2} \cong \alt{6}$ and normalises $U$ 
        shows that $U^{tc_2} = U^t$ and hence that $c_2$,
        like all other generators of the extended group just exhibited,
        normalises $U, U^t$ and $U^{t^2}$. Adjoining an element
        $c_3=h_{A6}^{t^{-1}}$ which interchanges $U$ and $U^t$ thus produces a group
        at least $2 \cdot 2^3 = 16$ times the size of
        $E$.
        Finally, since $t$ does not normalise
        $U^{t^2}$, the automorphism of $U$ induced by $t$ is still unaccounted for after the introduction
        of $c_3$. Thus, \[ \langle E, c_0, c_1, c_2, h_{A6}^{t^{-1}}, t \rangle
            \le \langle g_{A6}, h_{A6} \rangle \] has order at least
        $3 \cdot 2^4 | \alt{6} |^3 = | \normASixCu |$, and the claim follows.
    \end{proof}

\sg{$\normACA$}{normACA}{
  Let $L_N=\{g_{3ACA},g_{ACA}, h_{ACA}\}$ and $L_E=\{g_{2ACA},g_{3ACA}\}$ be as in the accompanying code. Then $\langle L_E\rangle \cong \alt{5}$ and $L_N$ generates the maximal subgroup  $N_\MM(E)\cong \normACA$.
    }
    
\begin{proof}
  The group $E$ is a subgroup $\alt{5}$ of type ACA, and it follows from  \cite[p.~234]{atlas} that $N_\MM(E)\cong\normACA$ is maximal in $\MM$. We verify that $L_E\subseteq \langle L_N\rangle$ and that  $v = g_{ACA} g_{2ACA}$
        centralises $E$.  We have $g_{2ACA} = g_{ACA}^3$, so $v\in \langle L_N \rangle$. We claim that the centraliser  $\unt{3}{8}{:}3$ of $E$ lies in $\langle L_N\rangle$ and is generated by $h_{ACA} v h_{ACA} v^2$,
        $( h_{ACA} v )^3 ( h_{ACA} v^2 )^3$, and
        $( h_{ACA} v )^3 h_{ACA} v^2$. These elements commute
        with $L_E$ and have orders  $19$, $7$, and $12$, respectively. The former two must lie in $\unt{3}{8} < \unt{3}{8}{:}3$. It follows from the Atlas that  $\unt{3}{8}$  has no maximal subgroups of order divisible by $7$ and $19$,
        which combined with the fact that  $\unt{3}{8}{:}3/\unt{3}{8} \cong 3$
        is cyclic of prime order and $\unt{3}{8}$ contains no elements of order
        $12$  establishes that the three elements  indeed
        generate the whole of $C_\MM(E)$. Finally, we confirm (analogously to the proof of \thm{normAAA}) that  conjugation by $h_{ACA}$ induces an outer automorphism  of $\alt{5}$; thus, $\langle L_N\rangle\cong \normACA$.
    \end{proof}

\sg{$\normLThreeTwo$}{normL7}{
Let $L_E=\{ x_{L2(7)}, y_{L2(7)}\}$ and $L_E=\{g_{L2(7)}, h_{L2(7)},n\}$ be as in the accompanying code. Then $E\cong \psl{2}{7}$ and $L_N$ generates the maximal subgroup $N_\MM(E)\cong\normLThreeTwo$.
    }
    
    \begin{proof}
        By \cite[\S5, Case 1]{anatomy}, the maximal subgroup we aim to construct is a normaliser of a subgroup $\psl{3}{2}$ with elements in  $\MM$-classes 2A, 3A, and 7A. We first check a presentation and verify that $E\cong \psl{2}{7}$; note that the latter is also isomorphic to $\psl{3}{2}$ by \cite[Prop. 2.9.1, (xi)]{highdeg_class}. Our generators in $L_E$ lie in $\GG$ and we verify that  $x_{L2(7)}, y_{L2(7)}, x_{L2(7)} y_{L2(7)} \in \GG$ lie in classes 7A, 2A, and 3A, respectively. We establish
        $g_{L2(7)}, h_{L2(7)}, n \in N_\MM(E)$ by determining that  $u = g_{L2(7)} x_{L2(7)}^{-1}$ and 
        $v = h_{L2(7)} y_{L2(7)}$ centralise $L_E$, while $n$ inverts $x_{L2(7)}$ and commutes with $y_{L2(7)}$. We have  $x_{L2(7)} = g_{L2(7)}^8$ and $y_{L2(7)} = h_{L2(7)}^{17}$, so $E\leq \langle L_N\rangle$. We know that $u$ and $v$ lie in $C_\MM(E)\cong \symp{4}{4}{:}2$, and show that they actually generate the centraliser. Since  squares of all elements
        in $\symp{4}{4}{:}2$ lie in the normal subgroup $\symp{4}{4}$,
        we conclude that the latter's intersection with $\langle u, v \rangle$
        contains the element $v^2$ of order $17$. Atlas information implies that the only maximal subgroup
        of $\symp{4}{4}$ containing such an element is $\psl{2}{16}.2$.
        On the other hand, the orders of involution centralisers in $\psl{2}{16}.2$
        are not divisible by any power of $2$ greater than $16$; it will thus
        follow that $\langle u, v \rangle$ contains $\symp{4}{4}$ if it can
        be shown that it contains an involution centraliser of
        order a multiple of $32$. To this end,  we construct five elements as word in $u,v$ that commute with the involution 
         $(vuv )^2$,  and generate a group of order $64$ as sought. To complete the proof, we must verify that $\langle u, v \rangle$
        properly contains $\symp{4}{4}$, so that $\langle u,v\rangle=C_\MM(E)$, and that $n$ extends $\psl{3}{2}\times \symp{4}{4}{:}2$ to the full normaliser of $E$. The first assertion is demonstrated by an adaptation of above argument: the largest $2$-power that divides the order
        of an involution centraliser in $\symp{4}{4}$ is $256$, whereas the elements above together with
        with $ ( w vw u )^2 w, ( v^2 w )^2 v $ generate a group of order $512$ centralising $( vuv )^2$. For the second result, an  additional factor of $2$ given
        by an outer automorphism of $E$ is required.
        Since elements of order $7$ in $\psl{3}{2}$ have normalisers $7{:}3$, if follows that  $n$ (which inverts $x_{L2(7)}$) induces        the required extension.
    \end{proof}

    \sg{$\normLTwoEleven$}{normL11}{
      Let $L_E=\{g_{2AAA},x_{11}\}$ and $L_N=\{g_{11}, h_{11}, n\}$ as in the accompanying code.
       Then $E\cong \psl{2}{11}$ and $L_N$ generates the maximal subgroup $N_\MM(E)\cong  \normLTwoEleven$.}
    
    \begin{proof}
        By the Atlas, maximal subgroups
        $\normLTwoEleven$  are the normalisers of subgroups isomorphic to $\psl{2}{11}$ containing elements of the  $\MM$-classes
        2A, 3A, and 5A. We confirm a presentation and conjugacy class fusion for $E$, so $N_\MM(E)\cong \normLTwoEleven$; specifically, we show that $E$ contains a subgroup $\alt{5}$ of type AAA generated by $g_{2AAA}$ and $g_{3AAA} =
        g_{2AAA} x_{11} g_{2AAA} x_{11}^3$. We also confirm that $u = g_{11} g_{2AAA}$ and
        $v = h_{11} x_{11}^{-1}$ both centralise $g_{2AAA}$ and $x_{11}$ (note that   $g_{2AAA}$ and $x_{11}$ clearly both belong to the normaliser) while
        $n$ commutes with the former and inverts the latter.  We compute that $g_{2AAA} = g_{11}^3$ and
        $x_{11} = h_{11}^2$, so $E\leq \langle L_N\rangle$. Next, we use Atlas information to deduce that $u$ and $v$ generate the subgroup $\cma{g_{2AAA}, x_{11}} \cong \MT{12}$: the latter has three maximal subgroups of order divisible by $11$, but none of these has elements of order $10$, whereas $vu$ and $( vu )^4 uvu^2$ have orders $11$ and $10$, respectively. Thus, $E\times C_\MM(E)\leq \langle L_N\rangle$. Elements of order $11$ in $\psl{2}{11}$ have normalisers $11{:}5$, whereas $n$ inverts $x_{11}$. This proves that $\langle L_N\rangle\cong \normLTwoEleven$.
    \end{proof}

    
    \sg{$\normASeven$}{normA7}{
     Let $L_U=\{ g_{2AAA}, g_{3AAA}\}$, $\sigma$, and  $L_N=\{g_{A7}, h_{A7}\}$ as in
        in accompanying code. Then $L_N$ generates the maximal subgroup $N_\MM(E) \cong \normASeven$ where $E=\langle U,U^\sigma\rangle$ with $U=\langle L_U\rangle\cong \alt{5}$.
    }
    
    \begin{proof}
      By the Atlas, normalisers of the direct product of two $\alt{5}$s of type AAA are exactly the maximal subgroups $\normASeven$. We verify that $E\cong \alt{5}\times\alt{5}$ has the required type and that $L_N\subset N_\MM(E)\cong \normASeven$. 
       We confirm that $E\leq \langle L_N\rangle$ by showing that $\sigma$ and the elements of $L_U$ can be written as words in the elements of $L_N$.
       Moreover, we verify that 
      $x = ( h_{A7} g_{A7} h_{A7} g_{A7}^2 )^6$
        and $y = ( g_{A7} h_{A7}^3 )^4$ commute
        with the generators of $E$ and satisfy the presentation for $\alt{7}$ given in  \cite[\S6.4]{cox_mos}; thus, $\langle L_N\rangle$ contains  $P \cong \alt{7} \times \alt{5} \times \alt{5}$. We now show that  $a = ( h_{A7} g_{A7}^6  )^{15}$ commutes with $g_{2AAA}$, $g_{2AAA}^{\sigma}$ and $g_{3AAA}^{\sigma}$,
        while $g_{3AAA}^a \in U$. On the other hand, $a$ cannot belong
        to $P$: since all elements
        thereof which centralise $g_{2AAA}$ are the product of an element
        of $C_U ( g_{2AAA} )$ and an element centralising $U$,
        it would otherwise hold that $g_{3AAA}^a$ lies in the $C_U(g_{2AAA})$-class of $g_{3AAA}$, which we disprove by an enumeration.  Thus, $P$ has index at least $2$ in $\langle P,a\rangle\leq \langle L_N\rangle$. The analogous argument with $a^\sigma$ yields a subgroup of $\langle L_N\rangle$ in which $P$ has index at least $4$. All generators so far  normalise $U$ and $U^{\sigma}$, so noting that $\sigma$ swaps these factors produces a subgroup of $\langle L_N\rangle$ in which $P$ has index at least $8$. This proves that $\langle L_N\rangle=N_\MM(E)$.
    \end{proof}

\pagebreak

    \sg{$\normASix$}{normA6}{

      Let $L_E=\{g_{2AAA},y_4\}$ and $L_N=\{g_{M11}, h_{M11}\}$ be as in the accompanying code. Then $L_N$ generates the maximal subgroup  $N_\MM(E)\cong  \normASix$ of $\MM$.
    }
    
    \begin{proof}
       By the  Atlas, the maximal subgroup $\normASix$ is the normaliser of a subgroup $\MT{11}$ containing 2A, 3A, and 5A elements. We check a presentation to prove $E\cong \MT{11}$. The group $E$ contains a subgroup $\alt{5}$ of type AAA meeting the correct conjugacy classes since the relevant generators defined in Lemma \ref{a5_lemma} can be written as words in elements of $L_E$.
%
         Since $u = g_{M11} y_4^{-1} g_{2AAA}$ and
        $v = h_{M11} y_4^{-1} (g_{2AAA} y_4 )^{-3}$ centralise $E$, it follows that $L_N\leq N_\MM(E)$, and  $y_4 = g_{M11}^{30} h_{M11}^{-10}$ and
        $g_{2AAA} = g_{M11}^{-10} y_4^{-1}$ confirms that $E\leq\langle L_N\rangle$. Lastly, we confirm that $\langle v,u\rangle$ has size $|\alt{5}\udot 2^2|$, and therefore $\langle L_N\rangle=N_\MM(E)$.
    \end{proof}

    \sg{$\normSFiveCu$}{normS5_3}{
 Let $L_U=\{  x_{S5}, y_{S5}\}$, $t$, and  $L_N=\{g_{S5}, h_{S5}\}$ as in
 in accompanying code. Then $L_N$ generates the maximal subgroup $N_\MM(E) \cong \normSFiveCu$ where $E=\langle U,U^t,U^{t^2}\rangle$ with $U=\langle L_U\rangle\cong \sym{5}$.
    }
    
    \begin{proof}
        By the Atlas, maximal subgroups $\normSFiveCu$ of $\MM$ are
        the normalisers of direct products of three subgroups $\sym{5}$
        containing 2A, 3A, and 5A elements. We test that $U$ has order $120$ and that $L_E$ satisfies a presentation for $\sym{5}$, so $U\cong \sym{5}$.   The fact that
        $g_{2AAA} = ( x_{S5} y_{S5}^2 )^2 x_{S5} y_{S5} x_{S5}$
        and $g_{3AAA} = x_{S5} y_{S5} x_{S5} y_{S5}^3$ guarantees that $U$ meets the correct conjugacy classes. We verify that $U$, $U^t$, and $U^{t^2}$ commute and intersect trivially (since the factors have trivial centre), so $E\cong \sym{5}\times\sym{5}\times\sym{5}$, as required. A direct computation also shows that $L_N\leq N_\MM(E)$, and we verify $E\leq \langle L_N\rangle$ by writing the generators in words of elements of $L_N$.
        An explicit computation also shows that  $h_{S5}$ centralises $U = \langle x_{S5}, y_{S5} \rangle$ and interchanges
        $U^t, U^{t^2}$ via conjugation, while conjugation by $t$ permutes
        $\{ U, U^t, U^{t^2} \}$ cyclically. This allows us to deduce that $\langle L_N\rangle \cong \normSFiveCu$.
    \end{proof}

    \sgNEW{$\normLTwoElevenSq$}{normL11_2}{
      Let $L_U=\{g_{2AAA},x_{11}\}$ and $L_N=\{g_{2AAA}, x_{11}, x_4\}$ be as in the accompanying code. Then $L_N$ generates a maximal subgroup $N_\MM(E)=\normLTwoElevenSq$ where $E=\langle U,U^{x_4}\rangle$ with $U=\langle L_U\rangle\cong \psl{2}{11}$.
    }{This is adapted from  \cite[\S3]{four_fus}.}

    \begin{proof}
        According to the Atlas, the maximal subgroups
        $\normLTwoElevenSq$ of $\MM$ are the normalisers of direct products
        $\psl{2}{11} \times \psl{2}{11}$ in which the elements of order
        2, 3, and 5 in each factor belong to classes 2A, 3A, and 5A.
        It was shown in the proof of \thm{normL11} that $U\cong \psl{2}{11}$ meets the correct conjugacy classes. We verify that $U$ and $U^{x_{4}}$ commute, so $E=\psl{2}{11}\times\psl{2}{11}$ and $N_\MM(E)\cong \normLTwoElevenSq$. 
        Since $x_4^2$ centralises  $g_{2AAA}$ and inverts $x_{11}$, we deduce that $L_N\subset N_\MM(E)$. As noted in the proof of \thm{normL11},  every automorphism of $U$ that inverts $x_{11}$ is
        an outer automorphism. Therefore $x_4^2\notin E$, and the claim follows.
    \end{proof}

    \sgNEW{$\unt{3}{4}{:}4$}{u34}{
      Let $L_N=\{ j_2,a_{12},g_{3BCB}\}$ be as in the accompanying code. Then $L_N$ generates a maximal subgroup 
        $\unt{3}{4}{:}4$.
    }{This is adapted from \cite[\S6]{dlp}.}

    \begin{proof}
        Per \cite[\S6]{dlp}, the maximal subgroups of $\MM$
        with the desired shape are the normalisers of subgroups $\unt{3}{4}$
        containing $\alt{5}$s of type BCB (as opposed to a second class,
        identified by Norton \cite[Table~5]{anatomy}, that contain subgroups $\alt{5}$
        of type BCA). We verify that $E=\langle j_2,g_{3BCB}\rangle\cong \unt{3}{4}$ by verifying a presentation. The presence of a subgroup $\langle g_{2BCB},g_{3BCB}\rangle\cong \alt{5}$ of type BCB is established by verifying that we can write the relevant generators in Lemma \ref{a5_lemma} as words in elements of $L_N$.
            An explicit calculation in the accompanying code confirm that $L_N\leq N_\MM(E)$. Now the claim follows since $a_{12}$ has order $12$ but $\unt{3}{4}$ has no elements of order $12$ or $6$. 
    \end{proof}




\begin{small}

\end{small}

\end{document}